\newtheorem{thm}{Theorem}[section]  
\newtheorem{propr}{Property}[section]
\newtheorem{propo}{Proposition}[section]
\newtheorem{cor}{Corollary}[section]
\newtheorem{lem}{Lemma}[section]
\newtheorem*{conj*}{Conjecture}
\newtheorem*{prob*}{Problem}
\theoremstyle{remark}
\newtheorem{rem}{Remark}[section]
\newtheorem*{notation*}{Notation}
\theoremstyle{definition}
\newtheorem{defn}{Definition}[section]
\newtheorem{ex}{Example}[section]
\let\c@lem\c@thm
\let\c@propr\c@thm
\let\c@propo\c@thm
\let\c@conj\c@thm
\let\c@prob\c@thm
\let\c@cor\c@thm
\let\c@rem\c@thm
\let\c@defn\c@thm
\let\c@notation\c@thm
\let\c@ex\c@thm
\def\makeautorefname#1#2{\expandafter\def\csname#1autorefname\endcsname{#2}}
\newcommand\NN{{\mathbb N}}
\newcommand\ZZ{{\mathbb Z}}
\newcommand\QQ{{\mathbb Q}}
\newcommand\RR{{\mathbb R}}
\newcommand\CC{{\mathbb C}}
\newcommand\QQbar{\overline{\mathbb Q}}
\newcommand\PR{{\mathbb P}_{\mathbb R}}
\newcommand\PC{{\mathbb P}_{\mathbb C}}
\newcommand\xx{{\mathbf{x}}}
\newcommand\Acal{{\mathcal A}}
\newcommand\Hcal{{\mathcal H}}
\newcommand\Ical{{\mathcal I}}
\newcommand\Pkz{{\mathcal P}_{\text{\sc kz}}}
\newcommand\saq{{\mathcal{SA}}_{\RRalg}}
\newcommand\dint{{\mathrm{d}}}
\newcommand\ocube{\mathbb{B}{}^\infty_{o}}
\newcommand\cube{\overline{\mathbb{B}}{}^\infty_{o}}
\DeclareMathOperator{\vol}{vol}
\DeclareMathOperator{\sgn}{sgn}
\DeclareMathOperator{\dist}{dist}
\DeclareMathOperator{\id}{id}
\DeclareMathOperator{\codim}{codim}
\DeclareMathOperator{\Sing}{Sing}
\DeclareMathOperator{\Jac}{Jac}
\DeclareMathOperator{\alg}{alg}
\newcommand\RRalg{{\RR}_{\alg}}
\def\@tocline#1#2#3#4#5#6#7{\relax
  \ifnum #1>\c@tocdepth 
  \else
    \par \addpenalty\@secpenalty\addvspace{#2}%
    \begingroup \hyphenpenalty\@M
    \@ifempty{#4}{%
      \@tempdima\csname r@tocindent\number#1\endcsname\relax
    }{%
      \@tempdima#4\relax
    }%
    \parindent\z@ \leftskip#3\relax \advance\leftskip\@tempdima\relax
    \rightskip\@pnumwidth plus4em \parfillskip-\@pnumwidth
    #5\leavevmode\hskip-\@tempdima
      \ifcase #1
       \or\or \hskip 1em \or \hskip 2em \else \hskip 3em \fi%
      #6\nobreak\relax
    \dotfill\hbox to\@pnumwidth{\@tocpagenum{#7}}\par
    \nobreak
    \endgroup
  \fi}
  \def\vhrulefill#1{\leavevmode\leaders\hrule\@height#1\hfill \kern\z@}
\offinterlineskip\vhrulefill{0.5pt}}
\title{A semi-canonical reduction for periods of Kontsevich-Zagier}
\author{Juan Viu-Sos}
\address{
	IMPA - Instituto de Matem\'atica Pura e Aplicada,
	Estr. Dona Castorina, 110 - Jardim Bot\^anico,
	Rio de Janeiro - RJ, 22460-320, Brazil}
\urladdr{https://jviusos.github.io/}
\email{jviusos@math.cnrs.fr}
\thanks{The author is supported by a PNPD/CAPES postdoctoral grant and was also supported by a postdoctoral grant \#2016/14580-7 by \emph{Fundação de Amparo à Pesquisa do Estado de São Paulo} (FAPESP).}
\subjclass[2010]{Primary 11J81, 11Y99; Secondary 14E15, 14P10}		
\keywords{Periods, Kontsevich-Zagier period conjecture, algorithmic, semi-algebraic sets, resolutions of singularities}
\date{}
\begin{document}

\begin{abstracts}
	\abstractin{english}
	The $\QQbar$-algebra of periods was introduced by Kontsevich and Zagier as complex numbers whose real and imaginary parts are values of absolutely convergent integrals of $\QQ$-rational functions over $\QQ$-semi-algebraic domains in $\RR^d$. The Kontsevich-Zagier period conjecture affirms that any two different integral expressions of a given period are related by a finite sequence of transformations only using three rules respecting the rationality of the functions and domains: additions of integrals by integrands or domains, change of variables and Stokes formula.
	
	In this paper, we prove that every non-zero real period can be represented as the volume of a compact $\RRalg$-semi-algebraic set obtained from any integral representation by an effective algorithm satisfying the rules allowed by the Kontsevich-Zagier period conjecture.
	\bigskip
	
	\abstractin{french}
	La $\QQbar$-alg\`ebre des p\'eriodes fut introduite par Kontsevich et Zagier comme les nombres complexes dont les parties r\'eelle et imaginaire sont valeurs d'int\'egrales absolument convergentes de fonctions $\QQ$-rationnelles sur des domaines $\QQ$-semi-alg\'ebriques dans $\RR^d$. La conjecture des p\'eriodes de Kontsevich-Zagier affirme que si une p\'eriode admet deux repr\'esentations intégrales, alors elles sont reli\'ees par une suite finie d'op\'erations en utilisant uniquement trois r\`egles respectant la rationalit\'e des fonctions et domaines : sommes d'int\'egrales par int\'egrandes ou domaines, changement de variables et formule de Stokes.
	
	Dans cet article, nous d\'emontrons que toute p\'eriode r\'eelle non nulle peut \^etre repr\'esent\'ee comme le volume d'un ensemble $\RRalg$-semi-alg\'ebrique compact obtenu \`a partir de n'importe quelle repr\'esentation int\'egrale via un algorithme effectif en respectant les r\`egles permises par la conjecture des p\'eriodes de Kontsevich-Zagier.
\end{abstracts}

\selectlanguage{english}
\maketitle



\tableofcontents


\pagebreak

\section{Introduction}\label{sec:intro}

%

Introduced by Kontsevich and Zagier in their 2001 article~\cite{KonZag01}, \emph{periods} are a class of numbers which contains most of the important constants in mathematics. They are strongly related to transcendence in number theory \cite{Wald06}, Galois theory and motives (\cite{Andre04}, \cite{Andre12}, \cite{Ayoub15}), and differential equations \cite{FisherRivoal14}. We refer to \cite{Wald15} and \cite{Muller14} for an overview on the subject.\\

Let $\QQbar$ (resp. $\RRalg$) be the field of complex (resp. real) algebraic numbers. 
A \emph{period of Kontsevich-Zagier} (also called \emph{effective period}) is a complex number whose real and imaginary parts are values of absolutely convergent integrals of rational functions over domains in a real affine space given by polynomial inequalities both with coefficients in $\RRalg$ , i.e. absolutely convergent integrals of the form
\begin{equation}\label{eqn:int_form}
    \Ical(S,P/Q)=\int_S \frac{P(x_1,\ldots,x_d)}{Q(x_1,\ldots,x_d)}\cdot \dint x_1\wedge\ldots\wedge \dint x_d
\end{equation}
where $S\subset\RR^d$ is a $d$-dimensional $\RRalg$-semi-algebraic set and $P,Q\in\RRalg[x_1,\ldots,x_d]$ are coprime. We denote by $\Pkz$ the set of periods of Kontsevich-Zagier and by $\Pkz^\RR=\Pkz\cap\RR$ the set of real periods. These numbers are \emph{constructible}, in the sense that any period is directly associated with a set of integrands and domains of integration given by polynomials with rational coefficients. 
The set $\Pkz$ forms a constructible countable $\QQbar$-algebra and contains many transcendental numbers such as $\pi$. Other examples of periods are the multiple zeta values (MZV), i.e.
\[
    \zeta(s_1, \ldots, s_k) = \sum_{n_1 > n_2 > \cdots > n_k > 0} \ \frac{1}{n_1^{s_1} \cdots n_k^{s_k}}
\]
for $s_1,\ldots,s_k$ positive integers and $s_1>1$. These numbers, properties and representations are studied in a combinatorial way by expressing the above series as iterated integrals of two kind of very simple rational functions over simplices. See~\cite{Wald00} for an extensive overview on MZV.

\subsection{Two open problems for periods}

Fixing a period, it can be defined by many different integral representations. Thus, a first natural question is to determine how these different representations are related to each other.

In their article, Kontsevich and Zagier described two open problems following this direction:
\begin{enumerate}
    \item {\bf The Kontsevich-Zagier (KZ) period conjecture (\cite[Conjecture 1]{KonZag01}).}  If a real period admits two integral representations, then we can pass from one integral to the other by only using three operations (called the \emph{KZ-rules}): integral additions by domains or integrands, change of variables and the Stokes formula. Moreover, these operations should respect the class of the objects defined above.

    \item {\bf Equality algorithm (\cite[Problem~1]{KonZag01}).} The determination of an algorithm which allows one to determine whether two periods are equal or not.
\end{enumerate}

\subsection{A semi-canonical reduction}

Even though the previous definition of periods is explicit and elementary, it does not give us a precise idea of what a period is and neither how to deal with the relations among them. %
An idea to simplify the way in which we deal with periods is to rewrite the integral representation either as a \emph{volume of a domain with trivial differential form} or an \emph{integral over a fixed ``trivial'' domain}.

The first kind of such reductions was suggested by Kontsevich and Zagier in~\cite[p. 3]{KonZag01}, where they proposed to express any arbitrary period as the volume of a semi-algebraic set. Concerning the second one, Ayoub proved a relative version of the KZ-conjecture~\cite{Ayoub14,Ayoub15} by encoding the complexity of periods over the differential form, fixing a simple domain of integration. In particular, he proved that the algebra of periods can be defined as the complex values coming from integrals $\int_{[0,1]^d} f\left(\underline{z}\right) \dint \underline{z}$ where $f(\underline{z})$, with $\underline{z}=(z_1,\ldots,z_d)\in\CC^d$, is a holomorphic function over the unit polydisk $\mathcal{D}^d=\{|z_i|\leq1\}$.\\

In this paper, we prove that any non-zero period can be algorithmically reduced up to sign as the volume of a compact $\RRalg$-semi-algebraic set in $\RR^d$. Moreover, we give a constructive way to obtain such a reduction from any integral representation of the period, respecting the three operations of the KZ-conjecture and using classical tools in algebraic geometry, such as (algorithmic) \emph{resolution of singularities}.

\begin{thm}[\bf Semi-canonical reduction]\label{thm:semialg_form_thm}
  Let $p$ be a non-zero real period given in a certain integral form $\Ical(S,P/Q)$ in $\RR^d$ as in (\ref{eqn:int_form}). There exists an effective algorithm only using KZ-rules such that $\Ical(S,P/Q)$ can be rewritten as %
  \[
     \Ical(S,P/Q) = \sgn(p)\cdot\vol_m(K),
  \]
  where $K\subset\RR^m$ is a compact top-dimensional semi-algebraic set and $\vol_m(\cdot)$ is the canonical volume in $\RR^m$, for some $0<m\leq d+1$.%
\end{thm}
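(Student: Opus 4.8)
The plan is to remove, one at a time, the three sources of non-canonicity in $\Ical(S,P/Q)$ — the sign of $p$, the non-trivial integrand, and the non-compactness of the domain — using only integral additions, changes of variables and Stokes' formula.

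\emph{Normalization.} Since the right-hand side carries the factor $\sgn(p)$, I replace $P$ by $-P$ if $p<0$ and assume $p>0$. Writing $P/Q=PQ/Q^2$ I may assume furthermore that the denominator is positive wherever it is defined. Splitting $S$, up to a set of measure zero, into the two $\RRalg$--semialgebraic pieces $\{P/Q>0\}$ and $\{P/Q<0\}$ is an addition of integrals by domain, and it gives $p=A-B$ with $A=\Ical(\{P/Q>0\},P/Q)\ge 0$, $B=\Ical(\{P/Q<0\},-P/Q)\ge 0$ and $A>B$. From now on the integrand of each of the two integrals is non-negative on its domain.

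\emph{Compactification and taming of the integrand via resolution of singularities.} This is where I expect the real work. Given an absolutely convergent $\int_T (P/Q)\,dx$ with $P/Q\ge 0$ on $T$, I choose a smooth compactification $\overline X$ of $\RR^d$ and a proper birational morphism $\pi$ such that the total transform of $\{Q=0\}$, of the Zariski closure of $\partial T$, and of the indeterminacy locus at infinity is a simple normal crossings divisor. In each affine chart $\pi$ is a polynomial map, hence $x=\pi(y)$ is an admissible change of variables; it turns the integrand into $(P\circ\pi)\,|\mathrm{Jac}\,\pi|/(Q\circ\pi)\,dy$, a product of monomials in the exceptional coordinates times a nowhere-vanishing regular factor. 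The key observation is that absolute convergence forces each exponent $e_i$ appearing in that monomial to satisfy $\int_0^1 y_i^{e_i}\,dy_i<\infty$, i.e.\ $e_i>-1$, and since these exponents are integers (orders of vanishing of regular functions along prime divisors) they are $\ge 0$; thus the transformed integrand has no poles on the now compact closure of the transformed domain. After subdividing into charts and gluing back by additions along domains, I may assume $A$ and $B$ are integrals over compact $\RRalg$--semialgebraic sets of bounded non-negative continuous integrands. \textbf{The main obstacle is here:} turning ``absolutely convergent'' into ``non-negative exponents after pullback'', and checking that the charts glue into honest compact semialgebraic pieces defined over $\RRalg$.

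\emph{From integrals to one volume.} For a bounded non-negative $g$ on a compact $\RRalg$--semialgebraic $T\subset\RR^d$, Fubini (an iterated change of variables) gives $\int_T g\,dx=\vol_{d+1}(R)$ with $R=\{(x,t):x\in T,\ 0\le t\le g(x)\}$ compact, top-dimensional and semialgebraic; applied to $A$ and $B$ this produces compact $R_A,R_B\subset\RR^{d+1}$ with $p=\vol_{d+1}(R_A)-\vol_{d+1}(R_B)$ and $\vol_{d+1}(R_A)>\vol_{d+1}(R_B)$. To realize this difference as a single volume I construct a measure-preserving semialgebraic map identifying $R_B$ with a semialgebraic subset $R'_B\subseteq R_A$, so that $K=\overline{R_A\setminus R'_B}$ is compact, top-dimensional and satisfies $\vol_{d+1}(K)=p$. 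Such an identification is obtained by presenting each of $R_A,R_B$, modulo measure zero and up to measure-preserving semialgebraic isomorphism, as the region under a semialgebraic graph over $[0,1]^d$ — the fibres of a bounded semialgebraic set over a coordinate projection are finite unions of intervals with semialgebraic endpoints, which can be slid together, and the base is straightened by cell decomposition — and then matching the two graphs after taking decreasing rearrangements, which are semialgebraic because the superlevel-set measures of a semialgebraic function of one variable are semialgebraic. A second delicate point is that all of this must be carried out over $\RRalg$: one cannot simply cut off a sub-region of $R_A$ of volume $\vol_{d+1}(R_B)$, since partial volumes of a semialgebraic family are generically transcendental, so the cutting hypersurface would not be algebraic; the rearrangement keeps the construction defined over $\RRalg$. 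Finally, the ambient dimension was raised exactly once, in the Fubini step, while resolution and cell decomposition preserve dimension, so $0<m\le d+1$ (with $m\le d$ when the integrand was already constant), and effectivity of the whole passage follows from effectivity of each ingredient — sign decomposition, Hironaka's algorithm, cylindrical cell decomposition and the rearrangements — each of which is one of the KZ--rules.
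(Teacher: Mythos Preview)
Your first two phases---sign splitting, projective compactification, and the use of an embedded resolution to force the local monomial exponents to be non-negative via absolute convergence---mirror the paper almost exactly (this is the content of Proposition~\ref{propo:separation_poles} and Corollaries~\ref{cor:sum_int_comp}--\ref{cor:pairs_semi_comp}), and your Fubini step producing compact $R_A,R_B\subset\RR^{d+1}$ with $p=\vol_{d+1}(R_A)-\vol_{d+1}(R_B)$ is again the paper's Corollary~\ref{cor:pairs_semi_comp}.

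The divergence, and the gap, is in the last step. You propose to embed $R_B$ into $R_A$ by writing each as a subgraph over $[0,1]^d$ and then applying decreasing rearrangements, arguing that ``the superlevel-set measures of a semialgebraic function of one variable are semialgebraic''. That justification only covers the case $d=0$ of the base: once the base has positive dimension, the decreasing rearrangement of a semialgebraic $g:[0,1]^d\to\RR$ requires the function $t\mapsto\vol_d\{g>t\}$, and for $d\ge 2$ these volumes are exactly the ``partial volumes of a semialgebraic family'' that you yourself flag as generically transcendental. So the rearranged graphs, and hence the embedding $R_B\hookrightarrow R_A$, need not be defined over $\RRalg$; your construction falls into the very trap you warned against. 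Iterating one-variable rearrangements does not obviously repair this, since after the first rearrangement the dependence on the remaining base variables already involves fibrewise lengths that need not compose semialgebraically with the next step.

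The paper avoids this by a much more elementary device (Section~\ref{sec:diff_vol}): approximate $R_A$ from inside and $R_B$ from outside by axis-parallel cubes of rational side $r/n$, and prove the purely combinatorial Lemma~\ref{lem:deltas_ineq_inclusion} that for $n$ large the number of outer cubes meeting $R_B$ is at most the number of inner cubes contained in $R_A$. The embedding is then a piecewise \emph{rational translation} of cubes, manifestly semialgebraic over $\QQ$ and volume-preserving, with no volume computation anywhere. What this buys is precisely the $\RRalg$-definability and effectivity you were worried about; what your rearrangement idea would buy, if it worked, is a more canonical (order-preserving) embedding, but it does not survive the transcendence obstruction in dimension $\ge 2$.
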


\begin{rem}
    By an \emph{algorithm} or a \emph{constructive procedure} we mean a finite explicit sequence of operations which produces an \emph{output} from a given \emph{input}, where each operation is described explicitly. Furthermore, an algorithm is called \emph{effective} if each operation can be effectively implemented on a machine.
\end{rem}

The effective algorithm of Theorem~\ref{thm:semialg_form_thm} is called a \emph{reduction algorithm}, and it is a refinement of Villamayor's algorithm~\cite{Vill89} for Hironaka's resolution of singularities, see Section~\ref{subsec:blowup_compact}. An explicit {\it pseudo-code} of this reduction is presented in Algorithm~\ref{alg:reduction} (see Appendix~\ref{sec:appendix} for more details).

\begin{rem}
    We can extend Theorem~\ref{thm:semialg_form_thm} for the whole set of periods $\Pkz\subset\CC$ considering representations of the real and imaginary part respectively. Such a representation for a period $p$ is called \emph{a geometric semi-canonical representation of $p$}. The existence of this kind of representation was assumed by Wan~\cite{Wan11} in order to develop a \emph{degree theory} for periods.
\end{rem}

As a direct consequence of Theorem~\ref{thm:semialg_form_thm}, we obtain the following result.

\begin{cor}\label{cor:periods_are_volumes}
    Any real period can be written up to sign as the volume of an affine compact semi-algebraic set.
\end{cor}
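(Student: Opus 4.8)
The goal is to transform an arbitrary absolutely convergent integral $\mathcal{I}(S,P/Q)=\int_S (P/Q)\,dx_1\wedge\cdots\wedge dx_d$ into a signed volume of a compact semi-algebraic set, using only the three KZ-operations (additive decompositions of domain and integrand, changes of variables, Stokes). I would proceed in four stages: (1) reduce to a nonnegative integrand on each piece; (2) replace the integrand by an extra dimension (a "graph trick"); (3) handle the rational function and the singularities of $Q$ via resolution of singularities, so that after a proper birational change of variables the integrand becomes a polynomial times a monomial times a smooth volume form; (4) compactify to pass from an unbounded or open domain to a compact one. Let me sketch each.

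\textbf{Stage 1 (sign and positivity).} Split $S$ into the semi-algebraic pieces where $P/Q>0$, where $P/Q<0$, and the measure-zero locus where it vanishes or is undefined; this is the "integral addition by domain" rule applied to the sign condition of a single polynomial, which is semi-algebraic. On the negative piece, replace $P/Q$ by $-P/Q$ and flip the overall sign; since $p\ne 0$ one of the two contributions dominates and after the remaining steps the global sign is recorded as $\mathrm{sgn}(p)$. From here on one may assume $P/Q\ge 0$ on $S$. \textbf{Stage 2 (eliminating the integrand by a dimension).} On a piece where $P/Q\ge 0$, write $\int_S (P/Q)\,dx = \mathrm{vol}_{d+1}\bigl(\{(x,t): x\in S,\ 0\le t\le P(x)/Q(x)\}\bigr)$. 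The subgraph region is semi-algebraic (the constraint $tQ(x)\le P(x)$ together with a sign condition on $Q$), and passing from the left side to the right side is exactly Fubini, an instance of the change-of-variables/Stokes calculus permitted by KZ. This already proves the corollary \emph{if} the subgraph region were compact, so the real content is making it compact without losing the KZ-admissibility.

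\textbf{Stage 3 (taming $Q$ and the boundary via resolution).} The subgraph region inherits unboundedness from $S$ and non-closedness from the pole locus $\{Q=0\}$ and from the open inequalities defining $S$. I would first compactify $\mathbb{R}^d$ inside a chart where the integrand and domain stay $\overline{\mathbb{Q}}$-algebraic — e.g. an $\overline{\mathbb{Q}}$-algebraic diffeomorphism $\mathbb{R}^d\cong(-1,1)^d$, or a sequence of blow-ups of a smooth projective compactification — carrying $S$ to a bounded set and the integrand to a new rational function whose denominator is supported on a fixed divisor. Then apply Hironaka's resolution of singularities (in Villamayor's effective form, cited in the excerpt) to the divisor cut out by $Q$ together with the boundary of $S$: after a proper birational morphism $\pi$, in each affine chart the pullback of the volume form is a monomial times $dx$, $\pi^{-1}(S)$ is a semi-algebraic set with normal-crossings boundary, and the integrand becomes (polynomial)$\times$(Laurent monomial). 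Absolute convergence forces the resulting exponents to keep the form locally integrable, and one further blow-up / partial-fractions step turns the Laurent monomial into something one can absorb into yet another dimension or into a monomial substitution $y_i = x_i^{a_i}$ (an $\overline{\mathbb{Q}}$-algebraic change of variables on the positive orthant). The upshot is that each piece becomes $\int_{K'} R(y)\,dy$ with $R$ a \emph{polynomial} with $\overline{\mathbb{Q}}$ coefficients and $K'$ a bounded semi-algebraic set; repeating Stage 2 with a polynomial integrand on a bounded domain yields a bounded subgraph, and taking its closure (which changes nothing measure-theoretically) gives a compact semi-algebraic set.

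\textbf{Stage 4 and the main obstacle.} Summing the finitely many pieces, one gets $p = \mathrm{sgn}(p)\cdot\mathrm{vol}_m(K)$ with $K$ compact top-dimensional semi-algebraic and $m\le d+1$ — which is precisely Theorem~\ref{thm:semialg_form_thm}, and the corollary is then immediate by dropping the sign bookkeeping. The hardest point, and where almost all the work lies, is Stage 3: ensuring that the resolution of singularities can be performed \emph{while staying inside the $\overline{\mathbb{Q}}$-algebraic and KZ-admissible world} — that each blow-up is a legitimate change of variables in the Kontsevich–Zagier sense, that the additive splittings over charts of the resolution respect the rules, that Stokes' theorem handles the comparison of the form before and after on the exceptional locus, and above all that absolute convergence is preserved and the final monomial/polynomial normalization is genuinely achievable in a uniform, algorithmic way. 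Controlling the dimension count so that $m\le d+1$ rather than growing with the number of blow-ups is a secondary delicate point, handled by reusing the single extra "graph" coordinate rather than introducing a new one at each step.
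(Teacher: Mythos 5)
Your stages 1--3 track the paper's Section~\ref{sec:sas_projective} quite closely: splitting $S$ by the sign of $P/Q$, passing to the subgraph in $\RR^{d+1}$, compactifying via projective charts, and resolving poles on the boundary by an effective (Villamayor-style) Hironaka resolution. The gap is in the final assembly. After stage~1 you have \emph{both} a positive and a negative contribution, and after stages~2--3 what you actually obtain is two compact semi-algebraic sets $K_1,K_2\subset\RR^{d+1}$ with $p=\vol_{d+1}(K_1)-\vol_{d+1}(K_2)$ --- this is exactly the paper's Corollary~\ref{cor:pairs_semi_comp}, not yet the theorem. You then write that ``summing the finitely many pieces'' yields a single compact $K$ with $|p|=\vol_m(K)$. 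But a \emph{difference} of volumes is not a sum, and there is no one-line way to merge $K_1$ and $K_2$: the set-theoretic difference $K_1\setminus K_2$ does not have volume $\vol(K_1)-\vol(K_2)$ unless $K_2\subset K_1$, and rescaling $K_1$ to the right volume would require a generally transcendental dilation factor, which is not an $\RRalg$-algebraic change of variables.

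That missing step is the entire content of the paper's Section~\ref{sec:diff_vol}. There one takes inner and outer Riemann approximations of $K_1$ and $K_2$ by cubes of rational side, proves (Lemmas~\ref{lem:deltas_ineq} and~\ref{lem:deltas_ineq_inclusion}) that for a fine enough subdivision the number of cubes \emph{meeting} $K_2$ is at most the number of cubes \emph{contained in} $K_1$, builds a volume-preserving piecewise-translation bijection $\Psi$ sending the former collection into the latter, and defines $K$ as the closure of $(H\cap K_1)\setminus\Psi(H\cap K_2)$. Without an argument of this kind your stage~4 conclusion is unsupported, so as written the proposal does not establish the corollary.
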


These results are part of the author's PhD thesis~\cite{ViuPhD}. Other results in this direction 
where obtained first by Yoshinaga~\cite[Sec.~3.2]{Yosh08} and later by Huber and
M\"uller-Stach in their recent book~\cite[Sec.~12.2]{HuberMullerStach:book}, both using the desingularization ideas for period integrals given by Belkale and Brosnan in~\cite{BelBro03} and adapted triangulations of semi-algebraic sets over varieties.

Motivated by how periods arise in quantum field theory, Belkale and Brosnan made use of resolution of singularities techniques to detail how $\Pkz$ is generated by integrals of a regular differential form over a top-dimensional relative cycle (with respect to a normal crossing divisor) in a smooth variety.

To our knowledge, Yoshinaga's work is the first geometric study of $\Pkz$ using volumes. In particular, he proved that $\Pkz$ is generated as an algebra by volumes of compact semi-algebraic sets in $\RR^d$. Using this, he developed an approximation theory for volumes of basic bounded semi-algebraic sets by Riemann sums, in order to exhibit a number which is not a period.

Huber and M\"uller-Stach's book covers most of the current knowledge about periods. One of the parts deals with the several existing ways of defining periods, showing that they all describe $\Pkz$. In particular, they prove that any period in $\Pkz^\RR$ can be written as the difference of two compact semi-algebraic sets of $\RR^d$. On the other hand, they also consider integrals of type $\int_S \omega$ where both $S$ and $\omega$ are not necessarily top-dimensional as a way of defining periods.

Our results cover the former descriptions of $\Pkz^\RR$ in terms of volumes constructed from integrals of type $\Ical(S,P/Q)$ (see Corollary~\ref{cor:pairs_semi_comp}). Moreover, we give a way to obtain a single compact semi-algebraic set expressing a period, providing an effective algorithm only using KZ-rules and avoiding triangulations of semi-algebraic sets.
\\

The word \emph{semi-canonical} refers to the fact that the resulting compact semi-algebraic set obtained by the reduction algorithm depends on the initial integral representation $\Ical(S,P/Q)$. In order to obtain geometrical information of a period coming from these semi-canonical representations, we need to deal with two phenomena:
\begin{itemize}
	\item {\bf Non-uniqueness of the dimension.} Given a period, one can obtain two representations in affine spaces of different dimensions. For example, $\pi^2$ can be obtained as the 4-dimensional volume of the Cartesian product of two copies of the unit disk and also as the $3$-dimensional volume of the set
	\[
		S_1=\left\lbrace (x,y,z)\in\RR^3 \mid x^2+y^2\leq1, 0\leq z((x^2+y^2)^2 + 1)\leq4\right\rbrace.
	\]
	
	\item {\bf Non-uniqueness for a fixed dimension.} We can find two compact semi-algebraic sets in some $\RR^d$ with the same volume, not being related by a linear change of variables. E.g.~taking the 2-dimensional volume of the unity semi-disk and the 2-dimensional volume of
	\[
		S_2=\left\lbrace (x,y)\in\RR^2 \mid 0<x<1, 0< y(1+x^2)<1\right\rbrace,
	\]
	we obtain $\pi/2$ in both cases.
\end{itemize}
The first issue can be fixed considering the \emph{minimal dimension} for which a period admits such a representation, leading to the notion of \emph{degree of a period}~\cite{Wan11}. As for the second one, more information about the nature of the compact semi-algebraic sets can be considered, e.g.~the notion of \emph{complexity of semi-algebraic sets} (see \cite[Sec. 4.5, p. 211]{BenRis90}). Despite these ambiguities, the semi-canonical reduction is a convenient tool to manipulate and compare different periods. In particular, this gives a way to approach the Kontsevich-Zagier period conjecture~\cite{CVS16}.

A similar connection between some periods and volumes was already established for sums of generalized harmonic series (see \cite{BeuKolkCal93}). However, the type of change of variables and domains which are used in loc. cit. does not belong to the KZ-rules.\\

The proof of Theorem~\ref{thm:semialg_form_thm} is based on the compactification of semi-algebraic sets and algorithmic resolution of singularities. Essentially, one has to deal with three main issues:
\begin{itemize}
	\item The first one comes from the framework of the KZ-conjecture, i.e.~only operations and constructions using the KZ-rules are allowed.
	
	\item The second one is to provide constructive methods at each step of the proof. This constraint does not appear in the formulation of the KZ-conjecture, but it is motivated by the problem of {\bf accessible identities}, i.e. identities between periods which can be obtained by a construction algorithm (see~\cite[Problem~1]{KonZag01}).

	\item The last one is more technical and it is related to the fact that we have to deal with affine compact semi-algebraic domains: we need to provide affine charts and partitions of domains which guarantee local compactness during the resolution process. Note that the arithmetic nature of the objects is preserved due to the behavior of resolution of singularities~\cite{Hir64}.
\end{itemize} 

As a general rule in our procedures, we give partitions of semi-algebraic sets cutting by hyperplanes, in order not to increase the complexity of the representation of the resulting semi-algebraic sets.\\

The outline of the paper is as follows. In Section~\ref{sec:sas_projective}, we construct a compactification of semi-algebraic sets by the natural inclusion into the real projective space $\PR^d$ by means of the \emph{projective closure} of a semi-algebraic set. Then, we resolve the poles at the boundary using resolution of singularities, in the same spirit as in~\cite{BelBro03}. However, we focus on the constructibility of the resolution, as well as on the way to construct partitions of the domain by affine compact sets. As a consequence, we prove that periods can be expressed as the difference of the volumes of two compact semi-algebraic sets (see Corollary~\ref{cor:pairs_semi_comp}). 
In Section~\ref{sec:diff_vol}, we complete the proof of Theorem~\ref{thm:semialg_form_thm} providing an explicit asymptotic method which allows us to write the difference of the volumes of two compact semi-algebraic sets $K_1$ and $K_2$ as the volume of a single one constructed algorithmically from $K_1$ and $K_2$. 
Examples of semi-canonical reductions of periods are presented in Section~\ref{sec:examples}. Finally, we derive our conclusions in Section~\ref{sec:conclusions}. A list of pseudo-codes implementing each of the preceding procedures is detailed in Appendix~\ref{sec:appendix}.

\bigskip

\noindent {\it Conventions.} 
    Throughout this article:
    \begin{enumerate}
        \item All the algebraic varieties are considered over the field of real algebraic numbers. We construct our theory from the real point of view, but most of the results about resolution of singularities can be obtained using classical algebraic geometry over algebraically closed fields by complexification of the varieties. In addition, we assume that the domains of integration $S$ are closed and \emph{regular}, i.e. $S$ coincides with the topological closure of its interior.
        
        \item The rational differential forms are considered forgetting the orientation: $(P/Q)(x_1,\ldots,x_d)\cdot |\dint x_1\wedge\ldots\wedge\dint x_d|$, i.e. we integrate a rational function over the Lebesgue measure on $\RR^d$. With a slight abuse of notation, we use $\dint x_1\wedge\ldots\wedge\dint x_d$ from now on.
    \end{enumerate}

\bigskip
\section{Semi-algebraic compactification of domains and resolution of poles}\label{sec:sas_projective}

The aim of this section is to algorithmically construct a representation of a period as integrals of regular rational functions over compact semi-algebraic sets, holding ambient dimension, and using partitions of domains and birational change of variables from another representation $\Ical(S,P/Q)$. Then, we derive that any period can be expressed as the difference of volumes of two compact semi-algebraic sets.

\subsection{Preliminaries about semi-algebraic geometry}
We recall basic definitions and properties about semi-algebraic sets and functions, following~\cite{BCR98}.

\begin{defn}
    A subset $S\subset\RR^d$ is called \emph{$\RRalg$-semi-algebraic} if it is can be described as
    \[
        S=\bigcup_{i=1}^s\bigcap_{j=1}^{r_i}\{f_{i,j} \ast_{i,j} 0\}
    \]
	where $f_{i,j}\in\RRalg[x_1,\ldots,x_d]$ and $\ast_{i,j}\in\{=,>\}$, for $i=1,\ldots,s$ and $j=1,\ldots,r_i$. A subset $S$ of a variety $W$ defined over $\RRalg$ is called \emph{$\RRalg$-semi-algebraic} if $S\cap U$ is $\RRalg$-semi-algebraic (considered as a subset of $\RR^d$) for every affine Zariski open subset $U\subset W$.
\end{defn}

In the following, we refer to $\RRalg$-semi-algebraic sets simply as \emph{semi-algebraic} sets.

\begin{propr}
    The semi-algebraic class is closed by finite unions, finite intersections and taking complements.
\end{propr}

\begin{propr}
    Let $S\subset \RR^{d+1}$ be a semi-algebraic set and $\pi:\RR^{d+1}\to \RR$ the projection on the first $d$ coordinates. Then $\pi(S)$ is a semi-algebraic subset of $\RR^d$.
\end{propr}

\begin{defn}
    Let $A\subset \RR^m$ and $B\subset \RR^n$ be two semi-algebraic sets. A mapping $f:A\to B$ is \emph{semi-algebraic} if its graph
    \[
        \text{Graph}(f) = \{(a,f(a))\in A\times B \mid a\in A\}
    \]
    is semi-algebraic in $\RR^{m+n}$.
\end{defn}

\begin{propr}
    Let $f:A\to B$ be a semi-algebraic mapping. Then:
    \begin{enumerate}
        \item The image and inverse image of semi-algebraic sets by $f$ are semi-algebraic.
        
        \item If $g:B\to C$ is a semi-algebraic mapping, then the composition $g\circ f$ is semi-algebraic.
        
        \item The $\RR$-valued semi-algebraic functions on a semi-algebraic set $A$ form a ring with addition and composition.
    \end{enumerate}
\end{propr}

\begin{ex}
	Piecewise polynomial and rational functions over semi-algebraic sets are semi-algebraic. For a semi-algebraic $\emptyset\neq A\subset \RR^d$, the \emph{distance function} to $A$, denoted by $\dist(x,A)$ and defined in $\RR^d$ is continuous semi-algebraic. Note that $\dist(x,A)$ vanishes over $\overline{A}$ and is positive elsewhere.
\end{ex}

\begin{propr}
    The semi-algebraic class is stable by taking the interior, closure and boundary.
\end{propr}

Following~\cite{BCR98}, we define the \emph{dimension} of a semi-algebraic set as the dimension of its Zariski closure. We denote by $\saq^d$ the set composed by all top-dimensional semi-algebraic subsets of $\RR^d$. Any open semi-algebraic set can be expressed as a finite union of open \emph{basic} semi-algebraic sets (\cite[Thm 2.7.2, p. 46]{BCR98}), i.e. semi-algebraic sets of the form $\{f_1>0,\ldots,f_s>0\}\subset\RR^d$, for some $f_1,\ldots,f_s\in\RRalg[x_1,\ldots,x_d]$.\\

For a semi-algebraic set $S$, we are interested in the study of the Zariski closure of $\partial S$, denoted by $\partial_z S$. In general, it is very difficult to give a representations of $\partial_z S$ in terms of the polynomials describing $S$. Using stratifications of semi-algebraic sets~\cite[Chapter 9]{BCR98}, we can give a decomposition of a semi-algebraic set of $S$ by open basic semi-algebraic sets of the form $B=\{f_1>0,\ldots,f_s>0\}$ up to zero-measure sets and, in this case, $\partial_z B\subset\left\lbrace\prod_{i=1}^s f_i=0\right\rbrace$.

\subsection{Projective closure of semi-algebraic sets and compact domains}\label{subsec:proj_closure_sas}

We are interested in the study of affine semi-algebraic sets and their compactification in the real projective space $\PR^d$. The main idea is to \emph{projectivize} our integration domains, and then decompose them into a collection of affine compact sets.

Denote by $[x_0:\ldots:x_d]$ the usual coordinates in $\PR^d$ and define the projective hyperplanes $\Hcal_{x_i}=\{x_i=0\}$. We consider the usual \emph{atlas} of $\PR^d$ given by $\{(U_{x_i},\varphi_{x_i})\}_{i=0}^d$, described by open Zariski sets $U_{x_i}=\PR^d \setminus \Hcal_{x_i}=\{x_i\neq0\}$, and the regular functions
\[
 \begin{array}{rccc}
    \varphi_{x_i}:& U_{x_i} & \longrightarrow & \RR^d\\
		  & [x_0:\ldots:x_d] & \longmapsto & \left(\frac{x_0}{x_i},\ldots,\frac{x_{i-1}}{x_i},\frac{x_{i+1}}{x_i},\ldots,\frac{x_d}{x_i}\right)%
 \end{array}
\]

\begin{rem}
    In the complex case, the projectivization of an algebraic set via homogenization is a classical tool to study algebraic varieties:  the topological closure of its inclusion coincides with the Zariski closure in $\PC^d$ by homogeneous polynomials. This is no more true in the real case: some extra points can appear in the real projective variety defined by homogenization and outside the topological closure of the affine part. E.g. the zero locus of $f(x,y)=(x+y)^2+1$ is empty in $\RR^2$, but that of $F(x,y,z)=(x+y)^2+z^2$ in $\PR^2$ consists of a double point.
\end{rem}

\begin{rem}\label{rem:sas_proj_desc}
Let $S$ be a semi-algebraic component in the first chart $U_{x_0}$ described by
\[
 S = \{(x_1,\ldots,x_d)\in\RR^d \mid p(x_1,\ldots,x_d)=0,\ q_i(x_1,\ldots,x_d)>0, i=1,\ldots,n\}.
\]
Its image in the other charts $\tilde{S}_j=\varphi_{x_j}\varphi_{x_0}^{-1}(S\setminus\{x_j\neq 0\})$ is also a semi-algebraic set and it can be expressed in local coordinates $(t_0,\ldots,\hat{t_j},\ldots, t_d)\in\RR^d$ as follows:
\[
  \tilde{S}_j=\left\lbrace t_0\neq0,\ t_0^{-d_p}P(t_0,\ldots, t_d)_{|t_j=1}=0,\ t_0^{-d_i}Q_i(t_0,\ldots, t_d)_{|t_j=1}>0, i=1,\ldots,n\right\rbrace,
\]
where $P$ and $Q_1,\ldots,Q_n$ are the homogenization of $p$ and $q_1,\ldots,q_n$, respectively, and $d_p=\deg p$, $d_i=\deg q_i$ for $i=1,\ldots,n$. Thus, $\tilde{S}_j$ splits into two disjoint semi-algebraic sets $\tilde{S}_j^{\pm}$ where:
\begin{align*}
 \tilde{S}_j^+ & =\left\lbrace t_0>0,\ P_{|t_j=1}=0,\ {Q_i}_{|t_j=1}>0,\ i=1,\ldots,n\right\rbrace,\\
 \tilde{S}_j^- & =\left\lbrace t_0<0,\ P_{|t_j=1}=0,\ (-1)^{d_i}{Q_i}_{|t_j=1}>0,\ i=1,\ldots,n\right\rbrace.
\end{align*}
Note that if $S$ is not contained in ${x_j=0}$, then at least one of the above sets is not empty.
\end{rem}

We define the \emph{projective closure} of a semi-algebraic set $S\subset\RR^d$ by $\overline{\varphi_{x_0}^{-1}S}$, i.e. the topological closure of the inclusion of $S$ into $\PR^d$ considering $\Hcal_{x_0}$ as the hyperplane at infinity. Note that the projective closure of a semi-algebraic set is a compact semi-algebraic set in $\PR^d$.\\%

Let $\ocube(r)$ (resp. $\cube(r)$) be the open (resp. closed) hypercube of radius $r>0$ centered at the origin in $\RR^d$ , i.e. $\ocube(r)=\{|x_i|< r\}$ (resp. $\cube(r)=\{|x_i|\leq r\}$). In the following, we give a useful decomposition of the real projective space $\PR^d$ as $d+1$ (closed) hypercubes centered at the origin of each usual chart, glued through their opposite faces.

\begin{propo}\label{propo:partition_projective}
    Let $\{C_i\}_{i=0}^d$ be the family of compact semi-algebraic sets in $\PR^d$ defined by $C_i=\overline{\varphi_{x_0}^{-1}V_i}$ where $V_0=\cube(1)$ and, for $1\leq i\leq d$, the set $V_i$ is the union of
    \[
	  \bigcap_{\substack{j=1\\ j\neq i}}^d \left\lbrace (x_1,\ldots,x_d)\in\RR^d \mid x_i-1\geq 0,x_i-x_j\geq 0,x_i+x_j\geq 0\right\rbrace
	\]
	and
	\[
	  \bigcap_{\substack{j=1\\ j\neq i}}^d \left\lbrace (x_1,\ldots,x_d)\in\RR^d \mid x_i+1\leq 0,x_i-x_j\leq 0,x_i+x_j\leq 0\right\rbrace.
	\]
	Then:
	\begin{enumerate}
		\item $C_i\subset U_{x_i}$ and $\varphi_{x_i}C_i=\cube(1)$, for any $0\leq i\leq d$.
	    \item $\PR^d=\bigcup_{i=0}^d C_i$.
	    \item The Zariski closure of $\bigcup_{i,j=0}^d (C_i\cap C_j)$ is the union of hyperplanes $\Acal=\{x_i^2-x_j^2=0 \mid 0\leq i<j\leq d\}$ of $\PR^d$.
	\end{enumerate}
\end{propo}

\begin{proof}
    By construction, the set $C_i$ is disjoint to the hyperplane $\Hcal_{x_i}=\{x_i=0\}$ in $\PR^d$, for any $i=0,\ldots,d$. %
    By a change of charts $\varphi_{x_i}\varphi_{x_0}^{-1}$ in $\PR^d$ induced by taking $\Hcal_{x_i}$ as hyperplane at infinity, we obtain that
	\[
	  \varphi_{x_i}\varphi_{x_0}^{-1}V_i=\bigcap_{j=1}^d \left\lbrace t_0\neq 0, -1\leq t_0\leq 1, -1\leq t_j\leq 1\right\rbrace,
	\]
	in local coordinates $(t_0,\ldots,\hat{t}_i,\ldots,t_d)$ in $\RR^d$. Taking the topological closure, one has that $\overline{\varphi_{x_i}\varphi_{x_0}^{-1}V_i}=\varphi_{x_i} C_i=\cube(1)$. %
	One can check that $\bigcup_{i=0}^d V_i=\RR^d$ (identified with $\PR^d\setminus\Hcal_{x_0}$), and it follows that the topological closure of each $V_i$ induces a covering of $\PR^d$. Finally, the intersection of two regions $C_i$ and $C_j$ is a $(d-1)$-dimensional semi-algebraic set contained in $\{x_i+x_j=0\}\cup\{x_i-x_j=0\}$, and this completes the proof.
	
\end{proof}

Using the above family of compact semi-algebraic sets once we have fix coordinates, we compactify our semi-algebraic domain of integration: first, passing through the projective space by projective compactification, and then decomposing it using $\{C_i\}_{i=0}^d$.

\begin{thm}\label{thm:sum_int_over_compacts}
    Let $S\in\saq^d$ be an open semi-algebraic set and $\omega=P/Q\cdot \dint x_1\wedge\ldots\wedge \dint x_d$ with $P/Q\in\RRalg\left(x_1,\ldots,x_d\right)$ such that the integral $\Ical(S,P/Q)$ converges absolutely. Then there exist a $(d-1)$-dimensional semi-algebraic set $X\subset\RR^d$, a partition $S=X\cup S_0\cup\cdots\cup S_d$, and a collection $\{\varphi_i\}_{i=1}^d$ of birational morphisms $\varphi_i:\RR^d\to \RR^d$ such that
    \[
	  \Ical(S,P/Q) = \sum_{i=0}^{d} \int_{\varphi_i^{-1}S_i} \varphi_i^* \omega,
    \]
    where $\varphi_i^{-1}S_i$ is bounded and $\varphi_i^* \omega$ is a rational $d$-form defined in the interior of $S_i$ for any $i=0,\ldots,d$. Moreover, this procedure is effectively algorithmic and it only depends on the representation of $S$.
\end{thm}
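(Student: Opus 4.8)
The plan is to build the partition from the covering $\PR^d=\bigcup_{i=0}^d C_i$ of Proposition~\ref{propo:partition_projective}, viewing $S$ inside $\PR^d$ via $\varphi_{x_0}^{-1}$. Since $S$ is open, $\varphi_{x_0}^{-1}S$ is a semi-algebraic subset of $U_{x_0}$ meeting no point of $\Hcal_{x_0}$; set $\widehat S_i:=\bigl(\varphi_{x_0}^{-1}S\cap C_i\bigr)\setminus\bigcup_{j<i}C_j$ for $i=0,\dots,d$, which are pairwise disjoint semi-algebraic subsets with $\widehat S_i\subseteq C_i\subset U_{x_i}$ and $\bigcup_{i=0}^d\widehat S_i=\varphi_{x_0}^{-1}S$. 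Put $S_i:=\varphi_{x_0}(\widehat S_i)\subseteq S$, let $\varphi_0:=\mathrm{id}_{\RR^d}$ and $\varphi_i:=\varphi_{x_0}\circ\varphi_{x_i}^{-1}$ for $1\le i\le d$ (a birational morphism with rational inverse $\varphi_{x_i}\circ\varphi_{x_0}^{-1}$), and finally $X:=S\setminus\bigcup_{i=0}^d S_i$.

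First I would check that $S=X\cup S_0\cup\cdots\cup S_d$ is a partition of the required kind. The $S_i$ are disjoint by construction and $X$ is their semi-algebraic complement in $S$; a point of $S$ escapes $\bigcup_i S_i$ only if its image in $\PR^d$ lies in some $C_i\cap C_j$ with $i<j$, and by Proposition~\ref{propo:partition_projective}(3) such points belong to the hyperplane arrangement $\Acal$. Hence $X\subseteq\varphi_{x_0}\bigl(\Acal\setminus\Hcal_{x_0}\bigr)$, a $(d-1)$--dimensional semi-algebraic subset of $\RR^d$; in particular $X$ has Lebesgue measure zero.

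Next I would verify boundedness and the integral identity. One has $S_0\subseteq\varphi_{x_0}(C_0)=\cube(1)$, while for $1\le i\le d$, $\varphi_i^{-1}(S_i)=\varphi_{x_i}\varphi_{x_0}^{-1}(S_i)=\varphi_{x_i}(\widehat S_i)\subseteq\varphi_{x_i}(C_i)=\cube(1)$ by Proposition~\ref{propo:partition_projective}(1); so every $\varphi_i^{-1}(S_i)$ is bounded. Each $\varphi_i$ restricts to a diffeomorphism between Zariski-open dense subsets whose complements are Lebesgue-null, so the change-of-variables theorem for absolutely convergent integrals gives $\int_{S_i}\omega=\int_{\varphi_i^{-1}S_i}\varphi_i^*\omega$; summing over $i$ and using that the $S_i$ exhaust $S$ up to the measure-zero set $X$ yields $\int_S\omega=\sum_{i=0}^d\int_{\varphi_i^{-1}S_i}\varphi_i^*\omega$. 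Since $\varphi_i$ is a rational map and $\omega$ a rational form, $\varphi_i^*\omega$ is a rational $d$--form; written in the local coordinates $(t_0,\dots,\hat t_i,\dots,t_d)$ of Remark~\ref{rem:sas_proj_desc}, the only new poles it may acquire lie along $\{t_0=0\}=\Hcal_{x_0}$, but the interior of $\varphi_i^{-1}(S_i)$ is contained in $\varphi_{x_i}\varphi_{x_0}^{-1}(U_{x_0})=\{t_0\ne0\}$ because $S$ is open, so $\varphi_i^*\omega$ restricts to a genuine rational $d$--form on the interior of $\varphi_i^{-1}(S_i)$, with poles only along the transform of $\{Q=0\}$, which is harmless since $\Ical(S,P/Q)$ converges absolutely.

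Finally, the entire construction — expressing $\varphi_{x_0}^{-1}S$ in the charts $U_{x_i}$ by homogenization as in Remark~\ref{rem:sas_proj_desc}, forming the explicit sets $C_i$ and maps $\varphi_i$, and performing semi-algebraic intersections and set differences — is algorithmic and uses only the data of $S$, never $\omega$, which establishes the last assertion. The only real point of care is to make each piece $S_i$ sit inside $S$ while simultaneously lying in a bounded region of a single affine chart; this is exactly what properties (1) and (3) of Proposition~\ref{propo:partition_projective} guarantee, so no genuine obstacle remains here — the substantive work of choosing such charts was already carried out in that proposition.
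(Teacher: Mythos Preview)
Your proof is correct and follows the same route as the paper: partition $S$ via the decomposition $\PR^d=\bigcup_i C_i$ of Proposition~\ref{propo:partition_projective} and use the chart changes $\varphi_{x_i}\varphi_{x_0}^{-1}$ to land each piece inside $\cube(1)$. One small slip worth noting: with your greedy choice $\widehat S_i=(\varphi_{x_0}^{-1}S\cap C_i)\setminus\bigcup_{j<i}C_j$ the pieces already exhaust $\varphi_{x_0}^{-1}S$, so $X=\emptyset$ and the paragraph arguing $X\subset\Acal$ is superfluous; the paper instead sets $S_0=S\cap\ocube(1)$ and $S_i=S\cap\mathring V_i$, which genuinely leaves the affine trace of $\Acal$ as the $(d-1)$--dimensional leftover $X$.
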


\begin{proof}
Define $S_0=S\cap\ocube(1)$ and $\varphi_0=\id_{\RR^d}$. For $i=1,\ldots, d$, we fix a hyperplane of the form $\{x_i=1\}$ for local coordinates $(x_1,\ldots,x_d)$ in $\RR^d$, and we consider $V_i$ the unbounded semi-algebraic region described in Proposition~\ref{propo:partition_projective} with respect to this hyperplane. %
Defining $S_i=S\cap \mathring{V}_i$ and after a change of charts $\varphi_{x_i}\varphi_{x_0}^{-1}$ in $\PR^d$ by taking $\Hcal_{x_i}$ as hyperplane at infinity, we obtain that
\[
    \varphi_{x_i}\varphi_{x_0}^{-1}S_i\subset\varphi_{x_i}C_i=\cube(1),
\]
which is a bounded semi-algebraic set in local coordinates $(t_0,\ldots,\hat{t}_i,\ldots,t_d)$ in $\RR^d$. By construction, it follows that $X=S\setminus\bigcup_{i=0}^dS_i$ is a $(d-1)$-dimensional semi-algebraic set. Finally, by decomposition of domains (up to measure zero sets) and change of variables, one has  
\[
	\int_S \omega = \sum_{i=0}^{d} \int_{S_i} \omega = \sum_{i=0}^{d} \int_{\varphi_i^{-1}S_i} \varphi_i^* \omega.
\]
\end{proof}

\begin{cor}\label{cor:compact_domain}
    Any period can be represented as a sum of absolutely convergent integrals of rational functions in $\RRalg(x_1,\ldots,x_d)$ over compact semi-algebraic sets, obtained algorithmically and respecting the KZ-rules from any integral representation. 
\end{cor}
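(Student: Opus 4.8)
The plan is to deduce the statement from Theorem~\ref{thm:sum_int_over_compacts} after two preliminary reductions, keeping track at each step that only KZ--rules are used. First, a period $p\in\Pkz$ has real and imaginary parts each of the form $\Ical(S,P/Q)$, so it suffices to treat a single real integral $\Ical(S,P/Q)$ in some $\RR^d$, with $P,Q\in\RRalg[x_1,\ldots,x_d]$ coprime and the integral absolutely convergent, and then reassemble. Second, I would replace $S$ by its interior $\mathring S$: by the regularity hypothesis $S=\overline{\mathring S}$, and $S\setminus\mathring S$ is a semi-algebraic set of dimension $<d$, hence Lebesgue-negligible, so $\Ical(S,P/Q)=\Ical(\mathring S,P/Q)$; this is just a partition of the domain into $\mathring S$ and a lower-dimensional remainder carrying zero integral, which is permitted. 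Now $\mathring S\in\saq^d$ is open and top-dimensional, so Theorem~\ref{thm:sum_int_over_compacts} applies directly.

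Applying that theorem to $\mathring S$ and $\omega=(P/Q)\,\dint x_1\wedge\cdots\wedge\dint x_d$ produces a $(d-1)$--dimensional semi-algebraic set $X$, a partition $\mathring S=X\cup S_0\cup\cdots\cup S_d$, and birational morphisms $\varphi_i\colon\RR^d\to\RR^d$ with
\[
    \Ical(\mathring S,P/Q)=\sum_{i=0}^d\int_{\varphi_i^{-1}S_i}\varphi_i^*\omega,
\]
each $\varphi_i^{-1}S_i$ bounded. The set $X$ is negligible and contributes nothing, and the decomposition into the $S_i$ together with the substitutions $\varphi_i$ are exactly operations of the types ``integral addition by domains'' and ``change of variables''. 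It remains to put each summand in the required shape: I would take $K_i=\overline{\varphi_i^{-1}S_i}$, which is a compact (bounded closed) semi-algebraic set, the replacement of $\varphi_i^{-1}S_i$ by $K_i$ again altering the domain only by a lower-dimensional set. Since each $\varphi_i$ is a composition of chart changes $\varphi_{x_j}\varphi_{x_0}^{-1}$, whose coefficients lie in $\QQ$, the pullback $\varphi_i^*\omega$ is again of the form $(P_i/Q_i)\,\dint t_1\wedge\cdots\wedge\dint t_d$ with $P_i,Q_i\in\RRalg[t_1,\ldots,t_d]$; and absolute convergence survives because $\int_{\varphi_i^{-1}S_i}|\varphi_i^*\omega|=\int_{S_i}|\omega|\leq\int_{\mathring S}|\omega|<\infty$ by the change of variables formula. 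Carrying this out for the real and imaginary parts of $p$ yields the claim, and the whole procedure is algorithmic since the one of Theorem~\ref{thm:sum_int_over_compacts} is and the auxiliary operations (passing to interiors and closures) are explicit.

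Once Theorem~\ref{thm:sum_int_over_compacts} is granted, the only genuine point requiring care is the legitimacy of the birational $\varphi_i$ as ``changes of variables'' in the Kontsevich--Zagier sense: each $\varphi_i$ is an isomorphism only away from a proper algebraic subset. I would handle this by first removing from $S_i$, and correspondingly from $\varphi_i^{-1}S_i$, the measure-zero locus where $\varphi_i$ or $\varphi_i^{-1}$ fails to be regular, so the substitution is applied on honest open sets where it is a genuine algebraic isomorphism, and only afterwards take topological closures to recover compact domains. All the lower-dimensional sets discarded along the way ($X$, the boundary pieces $S\setminus\mathring S$, the indeterminacy loci, the faces of the $K_i$) are collected into an exceptional set carrying zero integral, so no accessible identity is lost. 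I expect this bookkeeping, rather than any new computation, to be the crux of the argument.
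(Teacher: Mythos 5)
Your proposal is the paper's own proof spelled out in full: the paper simply states that the corollary ``follows directly from Theorem~\ref{thm:sum_int_over_compacts},'' and you are carefully tracking the negligible sets, the passage to interiors and closures, and the legitimacy of the birational substitutions — all of which the author leaves implicit. Same route, just with the bookkeeping made explicit.
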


\subsection{Resolution of singularities and affine compactness}\label{subsec:blowup_compact}

By Theorem~\ref{thm:sum_int_over_compacts}, we can assume that we are only dealing with integrals of type $\Ical(S,P/Q)$ over a compact domain $S\in\saq^d$. 
Our aim now is to ``take the volume under the graph of integration'' in order to obtain volumes of compact semi-algebraic sets. However, this is not always achievable, due to possible poles appearing at the boundary of $S$. In addition, the projectivization procedure described in last section produces potential new poles along the different coordinate axes.

Note that the above situation does not arise in the 1-dimensional case, since poles cannot appear in the boundary of $S\subset\RR$ whenever $\Ical(S,P/Q)$ is absolutely convergent, and the change of variables over the projective line $\PR^1$ removes automatically the pole of order 2 appearing at the point of infinity (see Example~\ref{ex:pi1_dim1}). %
We also assume that $P/Q$ is not constant, otherwise our result holds by a linear change of variables.\\

We use techniques from resolution of singularities in order to remove possible poles and obtain integrands which are regular in the boundary of the semi-algebraic domain.

\begin{thm}[Embedded Resolution of Singularities,~{\cite[Thm.~2.4]{BravoEncinasVillamayor:simple_desing}}]\label{thm:res_sing}
    Let $W_0$ be a smooth variety defined over a field of characteristic zero and consider $X$ a closed reduced subvariety of $W_0$. There exists a finite sequence
    \begin{equation}\label{eqn:res_sing}
            (W_0,X_0) \stackrel{\pi_1}{\longleftarrow} (W_1,X_1\cup E_1) \stackrel{\pi_2}{\longleftarrow} (W_2,X_2\cup E_1\cup E_2) \ldots  \stackrel{\pi_r}{\longleftarrow} (W_r,X_r\cup E_1\cup\ldots\cup E_r)
    \end{equation}
 where:
 \begin{enumerate}
     \item $W_{k-1} \stackrel{\pi_j}{\longleftarrow} W_k$ are proper birational maps between smooth varieties, given by \emph{blow-ups} over a smooth center $Z_{k-1}\subset Z_k$.
     
     \item The composite $W_{0} \stackrel{\pi}{\longleftarrow} W_r$ is a proper birational map such that $W_0\setminus\Sing X_0\simeq W_r\setminus\bigcup_{i=1}^r E_i$.
     
     \item The \emph{strict transform} $X_r=\overline{\pi^{-1}(X_0\setminus\Sing X_0)}$ is a regular subvariety and has normal crossings with the \emph{exceptional hypersurface} $\bigcup_{i=1}^r E_i$ in $W_r$.
 \end{enumerate}
 Such a map $W_{0} \stackrel{\pi}{\longleftarrow} W_r$ is called an \emph{embedded resolution of $X$ in $W_0$}.
\end{thm}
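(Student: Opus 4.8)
The plan is to deduce Theorem~\ref{thm:res_sing} from the \emph{principalization of ideal sheaves}, which is the form of resolution best suited to an algorithmic and functorial treatment (following Villamayor, Bierstone--Milman, Encinas--Villamayor and W\l odarczyk). First I would pass from the pair $(W_0,X_0)$ to the ideal sheaf $\mathcal{I}=\mathcal{I}_{X_0}$ and reduce, by the standard argument relating embedded resolution to principalization, to producing a finite composition of blow-ups $\pi\colon W_r\to W_0$ along smooth centers $Z_{j-1}\subset W_{j-1}$, each having simple normal crossings with the accumulated exceptional divisor $E_1\cup\cdots\cup E_{j-1}$, such that the controlled (weak) transform of $\mathcal{I}$ ultimately becomes trivial while its total transform defines a simple normal crossings divisor supported on $\bigcup_i E_i$. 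Since every center is chosen inside the locus lying over $\Sing X_0$, condition (2) is immediate; tracking the strict transform $X_r$ through the process and performing one final blow-up along it yields the regularity and transversality in (3), while (1) is built into the construction.

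The heart of the argument is the construction of a \emph{resolution invariant} together with an induction on $\dim W_0$. To each point one attaches a string $\mathrm{inv}(x)=(\nu_1(x),m_1(x),\nu_2(x),m_2(x),\dots)$ whose first entry is the order of $\mathcal{I}$ at $x$; this invariant takes values in a well-ordered set and is upper semicontinuous, so its maximal locus is closed. The crucial technical input, valid only in characteristic zero, is \emph{maximal contact}: near a point where $\nu_1$ is maximal, some partial derivative of order $\nu_1-1$ of a local generator of $\mathcal{I}$ has order exactly one, which cuts out a smooth hypersurface $H$ containing the maximal-order locus and stable under the relevant blow-ups. One then forms the \emph{coefficient ideal} of $\mathcal{I}$ on $H$: it encodes the problem on a variety of one dimension less, carries along the already-present exceptional divisors, and the inductive hypothesis applied to this lower-dimensional marked ideal produces blow-ups that lift to the ones wanted upstairs. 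After finitely many such blow-ups $\nu_1$ drops, and descending lexicographically through $\mathrm{inv}$ the whole process terminates.

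The main obstacle — and the place demanding the most care — is to make this descent in dimension \emph{canonical}, i.e. independent of the local choice of chart and of the hypersurface of maximal contact, so that the locally defined centers glue to a globally well-defined smooth center on $W_{j-1}$. This requires showing that the coefficient ideal, up to the equivalence generated by weighted integral closure and by \'etale base change, does not depend on $H$, and that $\mathrm{inv}$ is functorial for smooth morphisms; the same functoriality is what keeps the normal-crossings bookkeeping for $E_1\cup\cdots\cup E_r$ consistent through the inductive step. A secondary point, essential for the use made of this result in the present paper, is that the construction is \emph{effective}: each center is obtained from generators of $\mathcal{I}$ by a finite sequence of ideal-theoretic operations (sums, products, radicals, derivatives, saturations), so the sequence~(\ref{eqn:res_sing}) is produced by an explicit algorithm in the sense of Villamayor~\cite{Vill89}. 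Finally I would note that all these operations are defined over the base field, so no field extension is introduced and, after complexification, the whole construction stays within $\RRalg$-varieties.
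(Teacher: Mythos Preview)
Your outline is a reasonable high-level sketch of the modern constructive proof of embedded resolution (principalization, maximal contact, coefficient ideals, a well-ordered invariant, and induction on dimension, in the spirit of Villamayor, Bierstone--Milman, Encinas--Villamayor and W{\l}odarczyk). However, there is nothing to compare it to: the paper does \emph{not} prove Theorem~\ref{thm:res_sing}. The theorem is quoted as Hironaka's result, introduced by ``In \cite{Hir64}, Hironaka proves his famous\ldots'', and immediately afterwards the paper only remarks that the process is effectively algorithmic by Villamayor~\cite{Vill89} and implemented by Bodn\'ar--Schicho. It is used as a black box (notably in Proposition~\ref{propo:separation_poles} and Corollary~\ref{cor:sum_int_comp}).

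So your proposal is not wrong, but it goes far beyond what the paper does: you are sketching a proof of Hironaka's theorem itself, whereas the paper merely cites it. If your goal was to match the paper, the appropriate ``proof'' here is simply a reference to \cite{Hir64} (and to \cite{Vill89} for the algorithmic version), not an argument.
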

Resolution of singularities in characteristic zero was first proved by Hironaka in~\cite{Hir64}. This process is \emph{efficiently algorithmic} as a consequence of Villamayor's works and his collaborators~\cite{Vill89,BravoEncinasVillamayor:simple_desing}, who give a way to choose smooths centers at each step. A resolution of singularities algorithm was first implemented in {\tt Maple} and {\tt Singular}~\cite{Sing} by Bodn\'ar and Schicho~\cite{BodSchi001,BodSchi002}, and the current implementation in {\tt Singular} was made by Fr\"{u}hbis-Kr\"{u}ger and Pfister\footnote{See~\url{https://www.singular.uni-kl.de/Manual/4-0-3/sing_1454.htm\#SEC1529} for more information.}. An overview on this algorithm and recent methods can be found in~\cite{Fruhbis07,BlancoFruhbis12}.

\begin{rem}\label{rmk:resolution}
	Let $f\in\RRalg[x_1,\ldots,x_d]$ be a non-constant polynomial and  $X=\{a\in\RR^d \mid f(a)=0\}$. Hironaka's desingularization theorem above constructs a proper birational map $\pi:W\to\RR^d$ by a finite sequence of blow-ups over smooth centers, where $W$ is a %
	smooth variety, %
	and $\pi$ restricts into an isomorphism $W\setminus\pi^{-1}\Sing X\simeq \RR^d\setminus\Sing X$.%
	
	Considering the family of exceptional hypersurfaces $\{E_1,\ldots,E_r\}$ of the resolution $\pi:W\to\RR^d$ and denoting by $E_0$ the strict transform by $\pi$, there exist a collection of pairs of non-negative integers $\{(N_i,\nu_i)\}_{i=0}^r$, called the \emph{numerical data} of the resolution, such that the divisors associated to the pull-back of $f$ and the canonical differential $d$-form by $\pi$ are of the form $\sum_{i=0}^r N_i E_i$ and $\sum_{i=0}^r (\nu_i-1) E_i$ in $W$, respectively. Thus, $N_i$ and $\nu_i-1$ are non-negative numbers which correspond to the multiplicities of $f\circ\pi$ and $\pi^*(\dint x_1\wedge\cdots\wedge\dint x_d)$ over $E_i$, respectively. For the family of smooth hypersurfaces $\{E_0, E_1,\ldots, E_r\}$, to have \emph{normal crossings} means that they look locally as a union of coordinate hyperplanes, i.e. for any point $a\in W$ verifying $(f\circ\pi)(a)=0$, there exist local coordinates $(y_1,\ldots,y_d)$ of $W$ centered in $a$ and $f_1,\ldots,f_r\in\RRalg[y_1,\ldots,y_d]$ such that
	\begin{enumerate}
	 \item $E_i$ has local equation $f_i=0$, for $0\leq i\leq r$.
	 
	 \item $\left(\dint f_1\right)_{|_0},\ldots,\left(\dint f_r\right)_{|_0}$ are linearly independent.
	 
	 \item There exists $g,h\in\RRalg[y_1,\ldots,y_d]$ satisfying $g(0),h(0)\neq0$ and
	  \[
		(f\circ\pi)=g\cdot\prod_{k=1}^r f_{i_k}^{N_{i_k}}\quad\text{and}\quad \pi^*\left(\bigwedge_{i=1}^d \dint x_i\right)=h\cdot\prod_{k=1}^r f_{i_k}^{\nu_{i_k}-1}\cdot\bigwedge_{i=1}^d \dint y_i,
	  \]
	for some $1\leq i_1,\ldots, i_r\leq d$.
	\end{enumerate}
	Furthermore, locally near $a$ we can express
	\[
		(f\circ\pi)=\varepsilon\cdot\prod_{k=1}^r y_{i_k}^{N_{i_k}}\quad\text{and}\quad \pi^*\left(\bigwedge_{i=1}^d \dint x_i\right)=\eta\cdot\prod_{k=1}^r y_{i_k}^{\nu_{i_k}-1}\cdot\bigwedge_{i=1}^d \dint y_i,
	\]
	for some $1\leq i_1,\ldots, i_r\leq d$ and $\varepsilon,\eta$ real analytic functions with $\varepsilon(0),\eta(0)\neq0$. See e.g.~\cite[Ch.~3~and~11]{Igu00} for more details.
\end{rem}

\begin{rem}
    Since any connected algebraic variety $W$ is covered by charts $\{(U_i,\varphi_i)\}_{i\in I}$ given by open Zariski sets and regular morphisms, one can compute an integral over $W$ restricting to any chart $U$, i.e. for any measurable set $D\subset W$ one has that $\int_D \omega=\int_{D\cap U} \omega_{| U}$.
\end{rem}

Let $W_0$ be a smooth $\RRalg$-variety. 
For a semi-algebraic set $S\subset W_0$ and a top-dimensional rational differential form $\omega$ in $W_0$, denote by $\partial_z S$ the Zariski closure of $\partial S$ and by $Z(\omega)$ and $P(\omega)$ the zero and pole locus of $\omega$ in $W_0$, respectively. Define the $\RRalg$-subvariety of $W_0$ given by $X_{S,\omega}=\partial_z S\cup Z(\omega)\cup P(\omega)$ and consider $\pi:W_r\to W_0$ an embedded resolution of $X_{S,\omega}$ obtained by Theorem~\ref{thm:res_sing}. The \emph{strict transform of $S$} is the semi-algebraic set of $W_r$ given by $\widetilde{S}=\overline{\pi^{-1}(S\setminus\Sing X_{S,\omega})}$. %
One can check that $\partial_z \widetilde{S}$ is a $\RRalg$-subvariety of $\pi^{-1}(\partial_z S)$. Moreover, $\partial_z \widetilde{S}$ is a union of irreducible components of $\pi^{-1}(\partial_z S)$, and $\widetilde{S}$ is equal to $\pi^{-1} (S)$ up to zero measure semi-algebraic sets.  

The main idea is to use the embedded resolution of singularities over $X_{S,\omega}$ to send the poles of the differential form in $\Ical(S,P/Q)$ ''sufficiently far away`` from $\partial S$ and then to give an algorithmic expression of the above as a sum of integrals of the same type over compact sets. The latter is achieved from the following geometric criterion about convergence of rational integrals over semi-algebraic sets.%

\begin{propo}\label{propo:separation_poles}
    Let $W_0$ be a smooth real algebraic variety defined over $\RRalg$. Let $S\subset W_0$ be a compact semi-algebraic set in $W_0$ and $\omega$ a top-dimensional rational differential form in $W_0$. 
    Then, the integral $\int_S \omega$ converges absolutely if and only if there exists a finite sequence of blow-ups $\pi=\pi_r\circ\cdots\circ\pi_1:W_r\to W_0$ over smooth centers as in (\ref{eqn:res_sing}) such that $\widetilde{S}\cap P(\pi^*\omega)=\emptyset$, where $\widetilde{S}$ is the strict transform of $S$ by $\pi$.
    
\end{propo}

\begin{proof}
	Suppose that $\int_S \omega$ converges absolutely. Note that $P(\omega)$ does not intersect the interior of $S$ in this case. Consider $X_{S,\omega}\subset W_0$ and $\pi:W_r\to W_0$ an embedded resolution of $X_{S,\omega}$ as above. Let $a$ be a point in $\partial\widetilde{S}\subset X_{S,\omega}$. Following Remark~\ref{rmk:resolution}, we know that there exist local coordinates $(y_1,\ldots,y_d)$ around $a$ in $W_r$ with $d=\dim W_0$ such that %
	we have the following local expressions: 
	$$\partial_z \widetilde{S}:\ \alpha\cdot\prod_{k=1}^r y_{i_k}^{a_{i_k}}=0\quad\text{and}\quad \pi^*\omega=\beta\cdot\prod_{k=1}^r y_{i_k}^{b_{i_k}} \cdot\bigwedge_{i=1}^d \dint y_i,$$
	for some $1\leq i_1,\ldots, i_r\leq d$, with $a_{i}\in\ZZ_{\geq0}$ and $b_{i}\in\ZZ$~for any $i\in\{0,\ldots,r\}$, and $\alpha,\beta$ are real analytic functions which do not vanish at the origin. Considering $\widetilde{S}_\varepsilon = \widetilde{S}\cap\mathbb{B}{}^\infty_{a}(\epsilon)$ in these local coordinates, for sufficiently small $\epsilon>0$, the integral $\int_{\widetilde{S}_\epsilon}|\pi^*\omega|$ can be expressed as a linear combination with non-negative coefficients of integrals of type
	\begin{equation}\label{eqn:integral_bu-proof}
		\int_{\Gamma_\epsilon } |\beta|\cdot\prod_{k=1}^r |y_{i_k}|^{b_{i_k}},
	\end{equation}
	where $\Gamma_\epsilon=\{0<s_iy_i<\epsilon \mid i=1,\ldots,d\}$ for some $s_i\in\{-1,1\}$. It follows that the integral (\ref{eqn:integral_bu-proof}) converges if and only if the exponents $b_{i_k}$ are all non-negative. This is equivalent to assert that $a\not\in P(\pi^*\omega)$. The reciprocal follows directly, since $\int_S\omega=\int_{\widetilde{S}}\pi^*\omega$ and $\pi^*\omega$ is regular over the compact set $\widetilde{S}$.
    
\end{proof}

Similar results can be found in~\cite[Propo.~4.2]{BelBro03} and in~\cite[Lemma~12.2.4]{HuberMullerStach:book}. Here, we focus in the case when $W_0=\RR^d$. It is worth noticing that we do not need to give a complete embedded resolution of $X_{S,\omega}$, but only to consider a finite sequence of blow-ups over smooth centers which separates $\partial\widetilde{S}$ from the pole locus of the pull-back of the differential form. The former implies that the Zariski closure of $\partial S \cap Z(\omega)\cap P(\omega)$ cannot be top-dimensional in $\RR^d$ whenever we are dealing with periods, because in this case the integral $\Ical(S,P/Q)$ becomes divergent.

\begin{rem}
    The integers $\{b_i\}_{i=0}^r$ appearing in the proof of Proposition~\ref{propo:separation_poles} can be expressed as 
    \[
       b_{i}=N_{i}^{P} - N_{i}^{Q} + \nu_{i} - 1,
    \]
    where $N_{i}^{P}$ and $N_{i}^{Q}$ are the multiplicities at $a$ of $P\circ\pi$ and $Q\circ\pi$ over the divisor $E_{i}$, respectively, for any $0\leq i\leq r$.
\end{rem}

\begin{rem}\label{rem:bu_decomposition}
	Each step of the resolution process in Proposition~\ref{propo:separation_poles} corresponds to an intermediary blow-up with its corresponding strict transform, denoted $\widetilde{S}$ by abuse of notation. In the following, we describe a natural decomposition of $\widetilde{S}$ by affine compact semi-algebraic pieces.
	
	Let $\pi: W\to\RR^d$ be the blow-up of $\RR^d$ at some smooth center $Z$. It is known that $W$ is a closed smooth $d$-dimensional $\RRalg$-subvariety of $\RR^d\times\PR^m$, with $m=\codim Z$. An atlas $\{(V_i,\psi_i)\}_{i=0}^{m}$ of $W$ is induced by the usual decomposition of $\PR^m=\bigcup_{i=0}^m U_{x_i}$ as follows: one defines  $V_i=W\cap(\RR^d\times U_{x_i})$, and both the isomorphism $\psi_i: V_i\to\RR^d$ and the birational map $\varphi_i=\pi\circ\psi_i^{-1}:\RR^d\to\RR^d$ can be obtained using Gröbner basis from the equations of $Z$. See e.g.~\cite{Hauser:7_histories_bu} for a nice introduction about blow-ups and~\cite[Sec.~4.3]{BodSchi001} for computational details about the latter.

	Consider the partition by closed hypercubes $\PR^m=\bigcup_{i=0}^m C_i$ given in Proposition~\ref{propo:partition_projective} and define the partition $\widetilde{S}=\bigcup_{i=0}^m\widetilde{S}_i$, where $\widetilde{S}_i=\widetilde{S}\cap \left(\RR^d\times C_i\right)$. It follows that $\widetilde{S}_i$ is compact, semi-algebraic and contained in $V_i$ by construction. Thus, $\psi_i(\widetilde{S}_i)\subset\RR^d$ is also compact and belongs to $\saq^d$. Note that $\widetilde{S}_i\cap \widetilde{S}_j$ has dimension at most $d-1$, for any $j\neq i$.
\end{rem}

\begin{cor}\label{cor:sum_int_comp}
    Any period can be represented as a sum of absolutely convergent integrals of rational functions in $\RRalg(x_1,\ldots,x_d)$ over compact semi-algebraic sets, such that each rational function is regular over the corresponding integration domain. Moreover, such a representation can be obtained in an effective algorithmic way respecting the KZ-rules from any other integral representation. 
\end{cor}

\begin{proof}
    Let $\Ical(S,P/Q)$ be an absolute convergent integral over $S\subset\RR^d$ and denote $\omega=P/Q\cdot\dint x_1\wedge\ldots\wedge\dint x_d$. By Corollary~\ref{cor:compact_domain}, we can assume that $S$ is compact. By Proposition~\ref{propo:separation_poles}, there exists a finite sequence of blow-ups $\pi=\pi_r\circ\cdots\circ\pi_1:W_r\to W_0=\RR^d$ over smooth centers $\{Z_k\}_{k=0}^{r-1}$ such that the pullback $\pi^*\omega$ is regular over the strict transform  $\widetilde{S}$. For any $1\leq k\leq r$, define the number $m_k=\codim Z_{k-1}$ and take the list of discrete sets $\{M_k\}_{k=1}^r$ defined by $M_k=\{0,\ldots,m_1\}\times\cdots\times\{0,\ldots,m_k\}$.  %
    We recursively apply the construction in Remark~\ref{rem:bu_decomposition} after the first blow-up $\pi_1: W_1\to\RR^d$. 
    At each subsequent blow-up $\pi_k: W_k\to W_{k-1}$ over the center $Z_{k-1}$, consider the atlas 
    $\{(V_{i_1,\ldots,i_k},\psi_{i_1,\ldots,i_k}) \mid (i_1,\ldots,i_k)\in M_k\}$ 
    of $W_k$ with %
    \[
    	W_k=\bigcup_{(i_1,\ldots,i_k)\in M_k} V_{i_1,\ldots,i_k},\qquad 
    	    \begin{tikzcd}[row sep=large, column sep = large]
    	    	\psi_{i_1,\ldots,i_k}: V_{i_1,\ldots,i_k} \arrow{r}{\text{isom.}} & \RR^d
    	    \end{tikzcd}
    	    ,
    \]
    and such that the map $\bigcup_{i_k=0}^{m_k} V_{i_1,\ldots,i_{k-1},i_{k}}\to V_{i_1,\ldots,i_{k-1}}\simeq\RR^d$ is a blow-up of the affine space representing $\pi_k$, for any $(i_1,\ldots,i_{k-1})\in M_{k-1}$. Considering the strict transform of $S$ at each step, we obtain a finite sequence
    \begin{equation*}
                \left(\RR^d,S\right) \stackrel{\pi_1}{\longleftarrow}
                \cdots \stackrel{\pi_k}{\longleftarrow} \left(W_k, \bigcup_{(i_1,\ldots,i_k)\in M_k} \widetilde{S}_{i_1,\ldots,i_k}\right) \stackrel{\pi_{k+1}}{\longleftarrow} \cdots  \stackrel{\pi_r}{\longleftarrow} \left(W_r, \bigcup_{(i_1,\ldots,i_r)\in M_r} \widetilde{S}_{i_1,\ldots,i_r}\right)=\left(W_r,\widetilde{S}\right),
    \end{equation*}
    such that $\widetilde{S}_{i_1,\ldots,i_k}$ is a compact semi-algebraic subset of $V_{i_1,\ldots,i_k}$ and  $$%
    \widetilde{S}_{i_1,\ldots,i_k}=\bigcup_{i_{k+1}=0}^{m_{k+1}} \pi_{k+1}(\widetilde{S}_{i_1,\ldots,i_{k+1}}),$$ for any $(i_1,\ldots,i_k)\in M_k$.
	The above partition of $\widetilde{S}$ induces a decomposition of $S$ by compact sets in $\saq^d$, i.e.
	\[
		S = \bigcup_{(i_1,\ldots,i_r)\in M_r} S_{i_1,\ldots,i_{r}}\qquad\text{with}\qquad  S_{i_1,\ldots,i_{r}} = \pi(\widetilde{S}_{i_1,\ldots,i_{r}}),
	\]
	verifying that $S_{i_1,\ldots,i_{r}}\cap S_{j_1,\ldots,j_{r}}$ has dimension at most $d-1$ for any different subindices. 
    Following this decomposition and considering the birational maps $\varphi_{i_1,\ldots,i_{r}}=\pi\circ(\psi_{i_1,\ldots,i_{r}})^{-1}:\RR^d\to\RR^d$, we obtain a sequence of KZ-operations:
    \[
        \Ical(S,P/Q)=\sum_{(i_1,\ldots,i_{r})\in M_r}\int_{(\varphi_{i_1,\ldots,i_{r}})^{-1}S_{i_1,\ldots,i_{r}}}(\varphi_{i_1,\ldots,i_{r}})^*\omega=\sum_{(i_1,\ldots,i_{r})\in M_r}\Ical(T_{i_1,\ldots,i_{r}},P_{i_1,\ldots,i_{r}}/Q_{i_1,\ldots,i_{r}})
    \]
    where $T_{i_1,\ldots,i_{r}}=(\varphi_{i_1,\ldots,i_{r}})^{-1}S_{i_1,\ldots,i_{r}}\in\saq^d$ is compact and $P_{i_1,\ldots,i_{r}},Q_{i_1,\ldots,i_{r}}\in\RRalg[x_1,\ldots,x_d]$ are coprime polynomials verifying that the zero locus of $Q_{i_1,\ldots,i_{r}}$ does not intersect $T_{i_1,\ldots,i_{r}}$, for any $(i_1,\ldots,i_{r})\in M_r$.
    
\end{proof}

\begin{rem}
	In the above proof, we construct a decomposition of $S$ by considering every chart and map at every blow-up, i.e. the entire \emph{resolution tree} given by $\{(V_\alpha,\varphi_\alpha)\}_{\alpha\in M}$. In practice, we can stop blowing-up in a chart $V_{\alpha}$ when the associated compact set $\widetilde{S}_{\alpha}$ does not intersect the pole locus.
\end{rem}

\begin{rem}
 	Despite the fact that resolution of singularities is algorithmic, the previous construction is not \emph{efficient} in practice. However, this can be improved when $S\subset\RR^2$ since in the 2-dimensional case we only need to resolve a finite set of points. Moreover, there is a more efficient method to decompose our domain and choose the charts of each blow-up by using the algebraic tangent cone of the $\partial_z S$ at the different poles. %
 	This method is explained in the author PhD thesis~\cite[Sec.~II.3]{ViuPhD} and it is exemplified throughout Section~\ref{sec:examples} (see Example~\ref{ex:pi1_dim2} for a brief explanation).
\end{rem}

Considering the above constructions we obtain the following result, cf.~\cite[Thm.~12.2.1]{HuberMullerStach:book}.

\begin{cor}
\label{cor:pairs_semi_comp}
Let $p\in\Pkz^\RR$ be expressed as an absolutely convergent integral of the form $\Ical(S,P/Q)$. Then $p$ can be expressed from $\Ical(S,P/Q)$ as
\[
  p=\vol_{d+1}(K_1) - \vol_{d+1}(K_2),
\]
where $K_1, K_2$ are compact $(d+1)$-dimensional semi-algebraic sets in $\RR^{d+1}$. Moreover, this process is effectively algorithmic and respects the KZ-rules.
\end{cor}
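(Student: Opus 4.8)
The plan is to feed the output of Corollary~\ref{cor:sum_int_comp} into the elementary ``region under the graph'' construction and then to glue the resulting compact pieces into two sets by disjoint translation. First I would invoke Corollary~\ref{cor:sum_int_comp}: starting from $\Ical(S,P/Q)$, an algorithmic sequence of KZ--operations produces
\[
  p=\sum_{i=0}^m \Ical(T_i,P_i/Q_i),
\]
where each $T_i\in\saq^d$ is compact, $P_i,Q_i\in\RRalg[x_1,\ldots,x_d]$ are coprime, and $Q_i$ has no zero on $T_i$. Then $P_i/Q_i$ is continuous and bounded on the compact set $T_i$, and a bound $M_i\in\RRalg$ with $|P_i/Q_i|\le M_i$ on $T_i$ is effectively computable from the data. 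Since $Q_i$ is nowhere zero on $T_i$, the sign of $P_i/Q_i$ at a point of $T_i$ equals that of $P_iQ_i$, so I would split each domain as $T_i=T_i^{+}\cup T_i^{-}$ with $T_i^{\pm}=T_i\cap\{\pm P_iQ_i\ge0\}$ (an addition of integrals by domains, the two pieces meeting in a set of measure zero), so that $P_i/Q_i\ge 0$ on $T_i^{+}$ and $P_i/Q_i\le0$ on $T_i^{-}$.

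Next, for each $i$ I would pass to $\RR^{d+1}$ and consider the semi-algebraic sets
\[
  G_i^{\pm}=\left\lbrace (x,y)\in\RR^{d+1} \mid x\in T_i^{\pm},\ y\ge0,\ y\,Q_i(x)^2\le\pm P_i(x)Q_i(x)\right\rbrace .
\]
On $T_i^{\pm}$ the inequality $y\,Q_i^2\le\pm P_iQ_i$ is equivalent to $0\le y\le\pm P_i/Q_i$, so $G_i^{\pm}$ is precisely the region under the graph of $\pm P_i/Q_i$ over $T_i^{\pm}$; it has coefficients in $\RRalg$, is closed, and lies inside $T_i^{\pm}\times[0,M_i]$, hence is compact. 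By Fubini's theorem --- the inner one-variable integral $\int_0^{\pm(P_i/Q_i)(x)}\dint y=\pm(P_i/Q_i)(x)$ being the Stokes formula in dimension one --- one obtains $\vol_{d+1}(G_i^{+})=\int_{T_i^{+}}P_i/Q_i$ and $\vol_{d+1}(G_i^{-})=-\int_{T_i^{-}}P_i/Q_i$, whence
\[
  p=\sum_{i=0}^m\left(\vol_{d+1}(G_i^{+})-\vol_{d+1}(G_i^{-})\right).
\]

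Finally I would merge the sets $G_i^{+}$ into a single $\widetilde{K}_1$ and the sets $G_i^{-}$ into a single $\widetilde{K}_2$. Using $M_i$ together with an effective bound on the diameter of $T_i$, choose vectors $v_0,\ldots,v_m\in\QQ^{d+1}$ along the first coordinate axis, large enough that the translated boxes $(T_i^{\pm}\times[0,M_i])+v_i$ are pairwise disjoint, and set $\widetilde{K}_1=\bigcup_{i=0}^m(G_i^{+}+v_i)$ and $\widetilde{K}_2=\bigcup_{i=0}^m(G_i^{-}+v_i)$. Each translation $y\mapsto y-v_i$ is a unimodular change of variables, and the passage from the sum of the integrals over the disjoint translated copies to a single integral over their union is an addition of integrals by domains; hence $\widetilde{K}_1,\widetilde{K}_2$ are compact $\RRalg$--semi-algebraic subsets of $\RR^{d+1}$, obtained from $\Ical(S,P/Q)$ by KZ--operations, with $\vol_{d+1}(\widetilde{K}_1)=\sum_i\vol_{d+1}(G_i^{+})$ and $\vol_{d+1}(\widetilde{K}_2)=\sum_i\vol_{d+1}(G_i^{-})$. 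Appending to both of them one common copy of a fixed closed cube in $\RR^{d+1}$, placed disjointly from everything else, adds the same constant to both volumes and makes both sets top-dimensional (and, after replacing each by the closure of its interior, regular, without changing the volumes); the resulting $K_1,K_2$ satisfy $p=\vol_{d+1}(K_1)-\vol_{d+1}(K_2)$.

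The real content has already been supplied by Corollary~\ref{cor:sum_int_comp} --- the projective compactification of the domain and the resolution of the poles of the form --- and the present argument is essentially bookkeeping. The two points that do require care are that the ``region under the graph'' step is a Fubini-type reduction rather than one of the three primitive KZ--rules (so one must be content to view it as a derived consequence of them, namely the Stokes formula in one variable followed by Fubini), and that every bound entering the choice of the separating translations must be effectively computable, so that the whole procedure remains an \emph{effective} algorithm; neither is a genuine obstacle.
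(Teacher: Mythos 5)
Your proposal is correct and follows essentially the same route as the paper: feed Corollary~\ref{cor:sum_int_comp} into the region-under-the-graph construction and then separate the pieces by disjoint translation. The only differences are cosmetic: the paper splits $S$ by the sign of $P/Q$ \emph{before} invoking Corollary~\ref{cor:sum_int_comp}, whereas you invoke it first and then split each compact piece $T_i$; and where the paper expresses the region under the graph as a union of two cases via $H_i^\pm = tQ_i^\pm - P_i^\pm$ combined with the sign of $Q_i^\pm$, you clear denominators directly with $yQ_i^2 \le \pm P_iQ_i$. Both reorderings and both semi-algebraicity tricks are equivalent. Your extra remark about adding a common cube to force top-dimensionality, and your note that the Fubini/graph step is only a derived consequence of the KZ-rules rather than a primitive one, are careful points the paper glosses over but not essential deviations.
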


\begin{proof}
    Assume that $p\neq0$. Up to zero measure sets, we can give a partition of $S$ depending on the sign of the rational function $\frac{P}{Q}(x_1,\ldots,x_d)$ in $\RR^d$ as follows.
    \[
    	\Ical(S,P/Q)=\Ical(S^+,P/Q)-\Ical(S^-,-P/Q),
    \]
    where $S^\pm = \left\lbrace(x_1,\ldots,x_d)\in S\ \middle|\ \sgn\left((\frac{P}{Q})(x_1,\ldots,x_d)\right)=\pm 1\right\rbrace$. Note that both integrals have finite positive values since $\Ical(S,P/Q)$ is absolutely convergent. By Corollary~\ref{cor:sum_int_comp}, we can express them as:
	\[
		\Ical(S^\pm,P/Q)=\sum_{i=1}^{n_\pm}\Ical(S_i^\pm,P_i^\pm/Q_i^\pm),
	\]
    where $S_i^\pm\in\saq^d$ is compact and $P_i^\pm/Q_i^\pm\in\RRalg (x_1,\ldots,x_d)$ is reduced and regular over $S_i^\pm$, for any $i=1,\ldots,n_\pm$. Note that $P_i^\pm/Q_i^\pm$ does not change of sign over $S_i^\pm$.
    Considering the above integrals as the volume of the region delimited by the graph of $P_i^\pm/Q_i^\pm$ over $S_i^\pm$, we perform change of variables obtaining the following expression:
    \[
    	\Ical(S^\pm,P/Q)=\sum_{i=1}^{n_\pm}\int_{K_i^\pm}  1\ \dint t\dint x_1\cdots \dint x_d,
    \]
    where
    \begin{align*}
    	K_i^+ & =\left\lbrace (t,x_1,\ldots,x_d)\in\RR_{+}\times S_i^+\ \middle|\ t\leq\frac{P_i^+}{Q_i^+}(x_1,\ldots,x_d) \right\rbrace,\\
    	K_i^- & =\left\lbrace (t,x_1,\ldots,x_d)\in\RR_{+}\times S_i^-\ \middle|\ t\geq\frac{P_i^-}{Q_i^-}(x_1,\ldots,x_d) \right\rbrace.
    \end{align*}
	Both $K_i^+$ and $K_i^-$ are compact semi-algebraic sets. It remains to prove that $K_i^\pm\in\saq^{d+1}$. Defining $H_i^+=t\cdot Q_i^+-P_i^+\in\RRalg[t,x_1,\ldots,x_d]$, we have that $\left\lbrace t<({P_i^+}/{Q_i^+})(x_1,\ldots,x_d)\right\rbrace$ is expressed as the union of
	\[
	 \lbrace H_i^+(t,x_1,\ldots,x_d)<0\rbrace\cap\lbrace Q_i^+(x_1,\ldots,x_d)>0\rbrace,
	\]
	and
	\[
	 \quad\lbrace H_i^+(t,x_1,\ldots,x_d)>0\rbrace\cap\lbrace Q_i^+(x_1,\ldots,x_d)<0\rbrace.
	\]
    It follows that $K_i^+\in\saq^{d+1}$ since semi-algebraic domains are stable by finite union and intersection. Analogously, one has that $K_i^-\in\saq^{d+1}$.
    Since the sets $K_i^\pm$ are compact in $\RR^d$, there exists a sequence of $\RRalg$-translations $\left(\phi_i^\pm\right)_{i=1}^{n_\pm}$ in $\RR^{d+1}$ such that $\phi_i^\pm(K_i^\pm)\cap\phi_j^\pm(K_j^\pm)=\emptyset$, for any $i\neq j$. Defining $K_1=\bigcup_{i=1}^{n_+} \phi_i^+(K_i^+)$ and $K_2=\bigcup_{i=1}^{n_-} \phi_i^-(K_i^-)$, the result holds.
    
\end{proof}

\bigskip
\section{Difference of volumes and Riemann approximations}\label{sec:diff_vol}

We finish the proof of Theorem~\ref{thm:semialg_form_thm} detailing an algorithmic construction to obtain a single compact semi-algebraic set representing a period. Let $p$ be a non-zero period expressed as the difference of two volumes as in Corollary~\ref{cor:pairs_semi_comp}.%

\subsection{Partition by Riemann sums}\label{subsec:Riemann_sums}

Without loss of generality, we assume that $p$ is positive and that $0<\vol_d(K_2) < \vol_d(K_1)$. %
We use a similar geometric construction to that in~\cite[Sec.~3.4]{Yosh08} about inner and outer Riemann sums of compact semi-algebraic sets.\\

Since $K_1$ and $K_2$ are bounded subsets of $\RR^d$, we assume that there exists a positive integer $r>0$ such that both sets are contained in the hypercube $[0,r]^d$. In the following, we construct a partition of both $K_1$ and $K_2$ using rational subcubes. Let $n$ be a positive integer and define the parametrized family of cubes subdividing $[0,r]^d$ as follows:
\[
    C_n(k_1,\ldots,k_d)=\left[\frac{k_1}{n}r,\frac{k_1+1}{n}r\right]\times\ldots\times\left[\frac{k_d}{n}r,\frac{k_d+1}{n}r\right],
\]
where $0\leq k_1,\ldots,k_d\leq n$ are integers. Denote by $\mathring{C}_n(k_1,\ldots,k_d)$ the interior of the above cube.

The latter forms a parametrized partition of $[0,r]^d$ by cubes of size $(r/n)^d$. For $i=1,2$, consider such cubes intersecting $K_i$, i.e.
\[
  \widehat{\Delta}_n^{(i)}=\{(k_1,\ldots k_d)\in\{0,\ldots,n\}^d\mid C_n(k_1,\ldots k_d)\cap K_i\neq\varnothing\},
\]
and those which are contained in $K_i$:
\[
  \widecheck{\Delta}_n^{(i)}=\{(k_1,\ldots k_d)\in\{0,\ldots,n\}^d\mid C_n(k_1,\ldots k_d)\subset K_i\}.
\]
Denote by $\widehat{\delta}_i(n)$ and $\widecheck{\delta}_i(n)$ the cardinal of $\widehat{\Delta}_n^{(i)}$ and $\widecheck{\Delta}_n^{(i)}$, respectively. Since compact semi-algebraic sets are Borel sets, the following relation holds:
\begin{equation}\label{eqn:limits_delta}
  \lim_{n\to\infty}\widehat{\delta}_i(n)\cdot\left(\frac{r}{n}\right)^d=\lim_{n\to\infty}\widecheck{\delta}_i(n)\cdot\left(\frac{r}{n}\right)^d=\vol_d(K_i),\qquad i=1,2.
\end{equation}

\begin{lem}\label{lem:deltas_ineq}
  There exists a positive integer $n_0$ such that for any $N\geq n_0$, we have that $\widehat{\delta}_2(N)< \widehat{\delta}_1(N)$ and $\widecheck{\delta}_2(N)<\widecheck{\delta}_1(N)$.
\end{lem}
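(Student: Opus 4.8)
The plan is to exploit the fact that, by~(\ref{eqn:limits_delta}), each of the four quantities $\hat\delta_i(n)\cdot(r/n)^d$ and $\check\delta_i(n)\cdot(r/n)^d$ converges to $\vol_d(K_i)$ as $n\to\infty$, together with the assumption $0<\vol_d(K_2)<\vol_d(K_1)$. First I would set $\epsilon=\tfrac13\bigl(\vol_d(K_1)-\vol_d(K_2)\bigr)>0$. By the defining inclusions we always have $\check\delta_i(n)\leq\hat\delta_i(n)$, so the only real content is to separate the $K_1$-counts from the $K_2$-counts for $n$ large.

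Next, using the limits in~(\ref{eqn:limits_delta}), choose $n_0$ so large that for every $N\geq n_0$ all four numbers $\hat\delta_1(N)(r/N)^d,\ \check\delta_1(N)(r/N)^d,\ \hat\delta_2(N)(r/N)^d,\ \check\delta_2(N)(r/N)^d$ lie within $\epsilon$ of the respective volumes $\vol_d(K_1)$ (for the first two) and $\vol_d(K_2)$ (for the last two). Then for $N\geq n_0$ one gets
\[
  \check\delta_1(N)\Bigl(\tfrac{r}{N}\Bigr)^d \;\geq\; \vol_d(K_1)-\epsilon \;>\; \vol_d(K_2)+\epsilon \;\geq\; \hat\delta_2(N)\Bigl(\tfrac{r}{N}\Bigr)^d,
\]
where the strict middle inequality is exactly the choice of $\epsilon$ (the gap $\vol_d(K_1)-\vol_d(K_2)=3\epsilon$ exceeds $2\epsilon$). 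Dividing through by the positive factor $(r/N)^d$ yields $\hat\delta_2(N)<\check\delta_1(N)$. Since $\hat\delta_2(N)\geq\check\delta_2(N)$ and $\check\delta_1(N)\leq\hat\delta_1(N)$, this single chain gives at once both $\check\delta_2(N)<\check\delta_1(N)$ and $\hat\delta_2(N)<\hat\delta_1(N)$, which is the claim.

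There is no real obstacle here; the lemma is essentially a bookkeeping consequence of the measurability of the $K_i$ (Borel sets, so the Riemann-type sums converge to the Lebesgue volume) and of the strict inequality of volumes. The only point requiring a word of care is the uniform choice of $n_0$ serving all four sequences simultaneously, which is immediate since it is a finite collection of convergent sequences — take the maximum of the four individual thresholds. I would also remark that the same argument shows that for $N\geq n_0$ one has the stronger statement $\hat\delta_2(N)<\check\delta_1(N)$, which is what is actually used afterwards to nest an approximation of $K_2$ inside one of $K_1$.
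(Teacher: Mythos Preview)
Your argument is correct. In fact it proves more than the lemma asks: the chain $\hat\delta_2(N)<\check\delta_1(N)$ that you obtain is exactly the content of the paper's next lemma (Lemma~\ref{lem:deltas_ineq_inclusion}), which the paper establishes separately with a somewhat more elaborate decomposition $\check\delta_1-\hat\delta_2=(\hat\delta_1-\hat\delta_2)-(\hat\delta_1-\check\delta_1)$ and a two-step limit argument.

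The paper's own proof of Lemma~\ref{lem:deltas_ineq} is organized differently: it exploits the one-sided inequalities $\vol_d(K_i)\leq\hat\delta_i(n)(r/n)^d$ and $\check\delta_i(n)(r/n)^d\leq\vol_d(K_i)$ (outer sums overshoot, inner sums undershoot the volume) to sandwich the counts against the volumes directly, treating the $\hat\delta$ and $\check\delta$ cases in parallel and then taking the maximum of the two thresholds. Your route bypasses those monotonicity facts entirely, using only the convergence in~(\ref{eqn:limits_delta}) together with the trivial $\check\delta_i\leq\hat\delta_i$. The payoff is that a single $\epsilon=\tfrac13(\vol_d(K_1)-\vol_d(K_2))$ choice handles everything at once and renders Lemma~\ref{lem:deltas_ineq_inclusion} an immediate corollary rather than a separate statement; the paper's approach, on the other hand, makes the geometric meaning of the inner/outer approximations more visible.
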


\begin{proof}
For any $n\in\NN$, we have the following outer approximation of $K_i$ by elements of $\widehat{\Delta}_n^{(i)}$:
\[
  0<\vol_d(K_i)\leq\widehat{\delta}_i(n)\cdot\left(\frac{r}{n}\right)^d,\quad i=1,2.
\]
By (\ref{eqn:limits_delta}), there exists a positive integer $\widehat{n}_0$ such that, for any $N\geq \widehat{n}_0$,
\[
  0<\vol_d(K_2)\leq\widehat{\delta}_2(N)\cdot\left(\frac{r}{N}\right)^d<\vol_d(K_1)\leq\widehat{\delta}_1(N)\cdot\left(\frac{r}{N}\right)^d.
\]
Thus, we obtain the relation $\widehat{\delta}_2(N)<\widehat{\delta}_1(N)$. 
The same holds for inner approximations using $\widecheck{\Delta}_n^{(i)}$, obtaining an analogous $\widecheck{n}_0$. Taking $n_0=\max\{\widehat{n}_0,\widecheck{n}_0\}$, the result holds.

\end{proof}

\begin{lem}\label{lem:deltas_ineq_inclusion}
  There exists a positive integer $n_0$ such that for any $N\geq n_0$, we have that $\widehat{\delta}_2(N)\leq \widecheck{\delta}_1(N)$.
\end{lem}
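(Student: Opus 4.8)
The plan is a direct $\varepsilon$-comparison built on the two limits recorded in~(\ref{eqn:limits_delta}) together with the standing hypothesis $0<\vol_d(K_2)<\vol_d(K_1)$ from Subsection~\ref{subsec:Riemann_sums}. First I would fix the gap parameter
\[
   \varepsilon:=\tfrac13\bigl(\vol_d(K_1)-\vol_d(K_2)\bigr)>0 .
\]

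Then I would invoke the two relevant halves of~(\ref{eqn:limits_delta}) separately. Since $\hat\delta_2(n)\,(r/n)^d\to\vol_d(K_2)$, there is $\hat n_0$ with $\hat\delta_2(N)\,(r/N)^d<\vol_d(K_2)+\varepsilon$ for all $N\ge\hat n_0$ (one may additionally use the trivial bound $\hat\delta_2(N)\,(r/N)^d\ge\vol_d(K_2)$, since the outer cubes cover $K_2$, to pin the quantity inside $[\vol_d(K_2),\vol_d(K_2)+\varepsilon)$); and since $\check\delta_1(n)\,(r/n)^d\to\vol_d(K_1)$, there is $\check n_0$ with $\check\delta_1(N)\,(r/N)^d>\vol_d(K_1)-\varepsilon$ for all $N\ge\check n_0$. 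Setting $n_0:=\max\{\hat n_0,\check n_0\}$ — the one place where the two separate convergences must be merged — for every $N\ge n_0$ I would chain
\[
   \hat\delta_2(N)\Bigl(\frac rN\Bigr)^{d}<\vol_d(K_2)+\varepsilon=\vol_d(K_1)-2\varepsilon<\vol_d(K_1)-\varepsilon<\check\delta_1(N)\Bigl(\frac rN\Bigr)^{d},
\]
and cancel the common positive factor $(r/N)^d$ to get $\hat\delta_2(N)<\check\delta_1(N)$, hence in particular $\hat\delta_2(N)\le\check\delta_1(N)$.

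I do not expect a real obstacle here: this is purely a comparison of Riemann cube-counts that converge to the respective volumes, and the only care needed is the routine one of taking the maximum of the two thresholds produced by the two limits. The substantive content — that the outer and inner cube counts of a compact semi-algebraic set both converge to its Lebesgue volume, which ultimately rests on $\partial K_i$ having dimension $\le d-1$ and hence measure zero — is already packaged in~(\ref{eqn:limits_delta}), so nothing beyond the elementary inequality manipulation above is required. The statement is the ``mixed'' companion of Lemma~\ref{lem:deltas_ineq}, now comparing the \emph{outer} count of $K_2$ with the \emph{inner} count of $K_1$; the same scheme applies verbatim, only the two quantities being compared change, and it is exactly the strict gap $\vol_d(K_2)<\vol_d(K_1)$ that leaves room for the inner approximation of $K_1$ to overtake the outer approximation of $K_2$.
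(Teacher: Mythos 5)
Your proof is correct and follows essentially the same $\varepsilon$-comparison strategy as the paper, which also rests on the limits in~(\ref{eqn:limits_delta}) and the strict gap $\vol_d(K_2)<\vol_d(K_1)$. One small point in your favor: the paper's proof decomposes $\check{\delta}_1-\hat{\delta}_2$ through the intermediate quantity $\hat{\delta}_1$ and then fixes $\varepsilon_1=1$, a choice that only yields a positive gap when $p>1$; your choice $\varepsilon=\tfrac13\bigl(\vol_d(K_1)-\vol_d(K_2)\bigr)$ works for every $p>0$ and gives a slightly cleaner, more direct chain of inequalities.
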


\begin{proof}
 For any $n\in\NN$, we decompose:
\[
  \widecheck{\delta}_1(n) - \widehat{\delta}_2(n) = (\widehat{\delta}_1(n) - \widehat{\delta}_2(n)) - (\widehat{\delta}_1(n) - \widecheck{\delta}_1(n)).
\]
Multiplying by $\left(r/n\right)^d$ and taking limits, we obtain that:
\begin{align*}
  \lim_{n\to\infty}(\widehat{\delta}_1(n) - \widehat{\delta}_2(n))\left(\frac{r}{n}\right)^d & = \vol_d(K_1)-\vol_d(K_2) = p,\\
  \lim_{n\to\infty}(\widehat{\delta}_1(n) - \widecheck{\delta}_1(n))\left(\frac{r}{n}\right)^d & = \vol_d(K_1)-\vol_d(K_1) = 0
\end{align*}
Recall that $p>0$ and $\widehat{\delta}_1(n) - \widecheck{\delta}_1(n)\geq 0$. Furthermore, $\widehat{\delta}_1(n) - \widehat{\delta}_2(n)>0$ for $n$ sufficiently large by Lemma~\ref{lem:deltas_ineq}. Thus, we have that:
\[
  \forall\varepsilon_0>0, \exists n_0\in\NN\ \text{s.t.}\ \forall N>n_0:\ (\widehat{\delta}_1(N) - \widecheck{\delta}_1(N))\left(\frac{r}{N}\right)^d<\varepsilon_0,
\]
and
\[
  \forall\varepsilon_1>0, \exists n_1\in\NN\ \text{s.t.}\ \forall N>n_1:\ \left|(\widehat{\delta}_1(N) - \widehat{\delta}_2(N))\left(\frac{r}{N}\right)^d - p\right|<\varepsilon_1.
\]
Taking $\varepsilon_1=1$ and $\varepsilon_0=C-\varepsilon_1=C-1$, there exists $n_2\in\NN$ such that for any $N>n_2$, one has that:
\[
  0 \leq (\widehat{\delta}_1(N) - \widecheck{\delta}_1(N))\left(\frac{r}{N}\right)^d < C-1 < (\widehat{\delta}_1(N) - \widehat{\delta}_2(N))\left(\frac{r}{N}\right)^d.
\]
Then, $\widehat{\delta}_2(N)\leq \widecheck{\delta}_1(N)$ for any $N>n_2$ and the result holds.

\end{proof}

\subsection{Construction of the difference set}\label{subsec:construct_diff_set}
We finish our procedure by constructing a compact $K\in\saq^d$ such that $|p| = \vol_d(K)$ from $K_1$ and $K_2$. The basic idea of this construction is to use inner and outer Riemann approximation by cubes in $K_1$ and $K_2$, respectively. Taking a sufficiently small rational length in the above parametrized partition, we can give a rearrangement of cubes such that the outer cubes of $K_2$ can be translated into the inner cubes of $K_1$.\\

By Lemma~\ref{lem:deltas_ineq_inclusion}, we know that there exists $n_0\in\NN$ such that $\widehat{\delta}_2(n_0)\leq \widecheck{\delta}_1(n_0)$. Consider the grid in $[0,r]^d$ defined by the boundary of all cubes in the partition:
\[
  W=\bigcup_{(k_1,\ldots,k_d)\in\{0,\ldots,n_0\}^d}\left\lbrace (x_1,\ldots,x_d)\in [0,r]^d \mid x_i=\frac{k_i}{n_0}r,\ 1\leq i\leq d\right\rbrace.
\]
This is a this zero measure subset in $[0,r]^d$, which is removed in the following.
\[
 H=[0,r]^d\setminus W=\bigcup_{(k_1,\ldots,k_d)\in\{0,\ldots,n_0\}^d} \mathring{C}_{n_0}(k_1,\ldots,k_d).
\]

Let $\Sigma_{n_0,d}=\Sigma(\{0,\ldots,n_0\}^d)$ be the symmetric group of $\{0,\ldots,n_0\}^d$. Any element $\tau=(\tau_1,\ldots,\tau_d)\in\Sigma_{n_0,d}$ naturally induces a bijective map in the above parametrization given by
\[
 \begin{array}{cccc}
	\psi_\tau:& \{0,\ldots,n_0\}^d & \longrightarrow & \{0,\ldots,n_0\}^d\\
		 & (k_1,\ldots ,k_d)			& \longmapsto	&	\left(\tau_1(k_1),\ldots ,\tau_d(k_d)\right)
  \end{array}.
\]

\begin{lem}\label{lem:diff_set_final}
 There exists a semi-algebraic map $\Psi:H\rightarrow H$ such that $\Psi$ is volume-preserving and verifies that $\Psi(H\cap K_2)\subset(H\cap K_1)$.
\end{lem}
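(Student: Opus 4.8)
The plan is to realise the combinatorial bijection $\psi_\sigma$ of the index set $\{0,\ldots,n_0\}^d$ by an honest piecewise translation of the open cubes that make up $H$: such a map is automatically semi-algebraic and volume-preserving, so the entire content of the lemma is already packed into Lemma~\ref{lem:deltas_ineq_inclusion} and into the existence of $\sigma$.

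First I would record the product structure of $H$. Since the closed cubes $C_{n_0}(k_1,\ldots,k_d)$ meet only along the wire net $W$, their interiors are pairwise disjoint, so $H=\bigsqcup_{(k_1,\ldots,k_d)}\mathring{C}_{n_0}(k_1,\ldots,k_d)$; the open cubes are then exactly the connected components of $H$, each a rational translate of $\mathring{C}_{n_0}(0,\ldots,0)$, and $H=J^{d}$ with $J=\bigsqcup_{k=0}^{n_0}(\tfrac{k}{n_0}r,\tfrac{k+1}{n_0}r)$. Next, for each coordinate $j$ I would define a one-variable map $\Psi_j\colon J\to J$ sending $(\tfrac{k}{n_0}r,\tfrac{k+1}{n_0}r)$ onto $(\tfrac{\sigma_j(k)}{n_0}r,\tfrac{\sigma_j(k)+1}{n_0}r)$ by the translation $x\mapsto x+\tfrac{\sigma_j(k)-k}{n_0}r$. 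As $\sigma_j$ permutes $\{0,\ldots,n_0\}$ this is a bijection of $J$; it is piecewise affine with rational coefficients over a rational polyhedral partition (hence semi-algebraic) and every piece has slope $1$, so $\Psi_j$ preserves $1$-dimensional Lebesgue measure. Setting $\Psi=\Psi_1\times\cdots\times\Psi_d$ gives a semi-algebraic bijection $H\to H$ (its graph is a finite union of products of semi-algebraic sets) which on the component $\mathring{C}_{n_0}(k_1,\ldots,k_d)$ acts as the translation carrying it onto $\mathring{C}_{n_0}(\psi_\sigma(k_1,\ldots,k_d))$; since translations preserve $\vol_d$ and the components form a full-measure partition of $H$, the map $\Psi$ preserves volume.

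For the inclusion $\Psi(H\cap K_2)\subset H\cap K_1$ I would argue pointwise: if $x\in H\cap K_2$ lies in the component $\mathring{C}_{n_0}(k_1,\ldots,k_d)$, then $C_{n_0}(k_1,\ldots,k_d)$ meets $K_2$, i.e.\ $(k_1,\ldots,k_d)\in\hat{\Delta}_{n_0}^{(2)}$, so property~(1) of $\psi_\sigma$ gives $\psi_\sigma(k_1,\ldots,k_d)\in\check{\Delta}_{n_0}^{(1)}$, which means $C_{n_0}(\psi_\sigma(k_1,\ldots,k_d))\subset K_1$; hence $\Psi(x)\in\mathring{C}_{n_0}(\psi_\sigma(k_1,\ldots,k_d))\subset H\cap K_1$.

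I do not expect a genuine obstacle: the only point that requires care is that deleting the zero-measure wire net $W$ is precisely what turns the piecewise translation into a single-valued continuous (indeed homeomorphic) self-map of $H$, since on $W$ it would be multivalued; and that all grid cubes being congruent is what allows pure translations to realise $\psi_\sigma$ — the coordinatewise form of $\psi_\sigma$ is then only convenient, any permutation of the equal-size open cubes being realised the same way. The substantive inputs, $\hat{\delta}_2(n_0)\le\check{\delta}_1(n_0)$ and the resulting $\sigma$, are already in hand.
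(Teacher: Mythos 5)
Your proof is correct and takes essentially the same route as the paper: define $\Psi$ as the piecewise translation that moves each open cube $\mathring{C}_{n_0}(k_1,\ldots,k_d)$ of $H$ onto $\mathring{C}_{n_0}(\psi_\sigma(k_1,\ldots,k_d))$, then observe it is semi-algebraic and measure-preserving, with the inclusion $\Psi(H\cap K_2)\subset H\cap K_1$ falling out of $\psi_\sigma(\hat{\Delta}_{n_0}^{(2)})\subset\check{\Delta}_{n_0}^{(1)}$. Your version is in fact a bit more careful than the paper's (you include the $r/n_0$ normalization in the translation, which the paper's displayed formula omits), and your closing remark — that any set-permutation of the cubes, not just a coordinatewise product $\sigma_1\times\cdots\times\sigma_d$, is realizable by translations — is a worthwhile observation, since it is precisely what guarantees that the $\sigma$ invoked from Lemma~\ref{lem:deltas_ineq_inclusion} actually exists.
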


\begin{proof}
 From Lemma~\ref{lem:deltas_ineq_inclusion}, there exists a $\sigma\in\Sigma_{n_0,d}$ verifying the following conditions:
 \begin{itemize}
  \item[\sf (P1)] $\psi_\sigma(\widehat{\Delta}_{n_0}^{(2)})\subset\widecheck{\Delta}_{n_0}^{(1)}$.
  \item[\sf (P2)] $\psi_\sigma=\id$ in $\{0,\ldots,n_0\}^d\setminus\widehat{\Delta}_{n_0}^{(2)}$.
 \end{itemize}
 The map $\psi_\sigma$ induces a bijective map $\Psi:H\rightarrow H$ which sends a point $(x_1,\ldots,x_d)$ in $\mathring{C}_{n_0}(k_1,\ldots,k_d)$ to the point
 \[
     \left(x_1 + \frac{\sigma_1(k_1) - k_1}{n_0}r ,\ldots, x_d + \frac{\sigma_d(k_d) - k_d}{n_0}r \right)\in\mathring{C}_{n_0}(\sigma_1(k_1),\ldots ,\sigma_d(k_d)).
 \]
 This map rearranges the open cubes in the partition of $[0,r]^d$ by translations induced by $\sigma$ and it is semi-algebraic by construction. The latter is a volume-preserving map and the conditions {\sf (P1)} and {\sf (P2)} imply that $\Psi(H\cap K_2)\subset(H\cap K_1)$.

\end{proof}

Finally, we can define $K$ as the closure over $\RR^d$ of $(H\cap K_1)\setminus\Psi(H\cap K_2)$ and this completes the proof of Theorem~\ref{thm:semialg_form_thm}. 
Note that the above process, which constructs the new compact semi-algebraic set $K$ from $K_1$ and $K_2$, is completely algorithmic, effective and respects the KZ-rules.

\bigskip
\section{Some examples of semi-canonical reduction}\label{sec:examples}
We detail some examples of semi-canonical reductions following the ideas of the effective reduction algorithm. Our aim is also to illustrate how one deals with the main issues of this procedure in concrete examples.

\subsection{A basic example: $\pi$}\label{subsec:pi}
 \begin{ex}\label{ex:pi1_dim1}
  A classical way to write $\pi$ as an integral is the following:
  \[
  	\Ical\left(\RR,1/(1+x^2)\right)=\int_{-\infty}^{+\infty}\frac{\dint x}{1+x^2}.
  \] 
  In order to obtain $\pi$ as the volume of a semi-algebraic set from the above, one first decomposes the real line in three pieces using the point arrangement $\mathcal{A}=\left\lbrace\{x=-1\},\{x=1\}\right\rbrace$. One obtains
  \[
  	\int_{-\infty}^{+\infty}\frac{\dint x}{1+x^2}=\int_{-1}^{1}\frac{\dint x}{1+x^2} + \int_{S}\frac{\dint x}{1+x^2},
  \]
  where $S=\{x^2-1>0\}$ is unbounded. Consider now the canonical inclusion of $S$ into the second chart $U_y=\{[x:y] \mid y\neq0\}$ of the projective line $\PR^1$. The change of charts with the first one $U_x=\{[x:y] \mid x\neq0\}$ gives a diffeomorphism $\phi$ of $\RR^*$ into itself expressed by $\phi(y)=1/y$, where $|\Jac(\phi)(y)|=1/y^2$ and $\phi^{-1}S=\{y\neq0, 1-y^2>0\}=(-1,1)\setminus \{0\}$. Then:
  \[
  	\int_{S}\frac{\dint x}{1+x^2}=\int_{\phi^{-1}S}\phi^*\left(\frac{\dint x}{1+x^2}\right)=\int_{(-1,1)\setminus \{0\}}\frac{y^2}{1+y^2}\cdot\frac{1}{y^2}\dint y=\int_{-1}^{1}\frac{\dint y}{1+y^2}.
  \]
  Thus, using partitions and rational change of variables given by $\phi$, we express:
  \[
  	\Ical\left(\RR,1/(1+x^2)\right)=\int_{-1}^{1}\frac{\dint x}{1+x^2} + \int_{S}\frac{\dint x}{1+x^2}=\int_{-1}^{1}\frac{\dint x}{1+x^2} + \int_{-1}^{1}\frac{\dint y}{1+y^2}.
  \]
  Taking the area under the graph in both integrals, followed by a symmetry with respect to the horizontal axis in the second integral, we obtain a semi-canonical reduction for $\pi$ (see Figure~\ref{fig:sc_red_pi}).
  \begin{align*}  
  	\pi & =\int_{\left\lbrace\begin{array}{c} -1\leq x\leq 1\\ 0\leq z(1+x^2)\leq 1 \end{array}\right\rbrace} \dint x \dint z + \int_{\left\lbrace\begin{array}{c} -1\leq y\leq 1\\ 0\leq u(1+y^2)\leq 1 \end{array}\right\rbrace} \dint y \dint u\\
  		& = \int_{\left\lbrace\begin{array}{c} -1\leq x\leq 1\\ 0\leq z(1+x^2)\leq 1 \end{array}\right\rbrace} \dint x \dint z + \int_{\left\lbrace\begin{array}{c} -1\leq y\leq 1\\ -1\leq u(1+y^2)\leq 0 \end{array}\right\rbrace} \dint u \dint y\\
  		& =\vol_2\left(\left\lbrace\begin{array}{c} -1\leq x\leq 1\\ -1\leq z(1+x^2)\leq 1 \end{array}\right\rbrace\right).
  \end{align*}

  \begin{figure}[ht]
  	\centering
  	\includegraphics[scale=0.4]{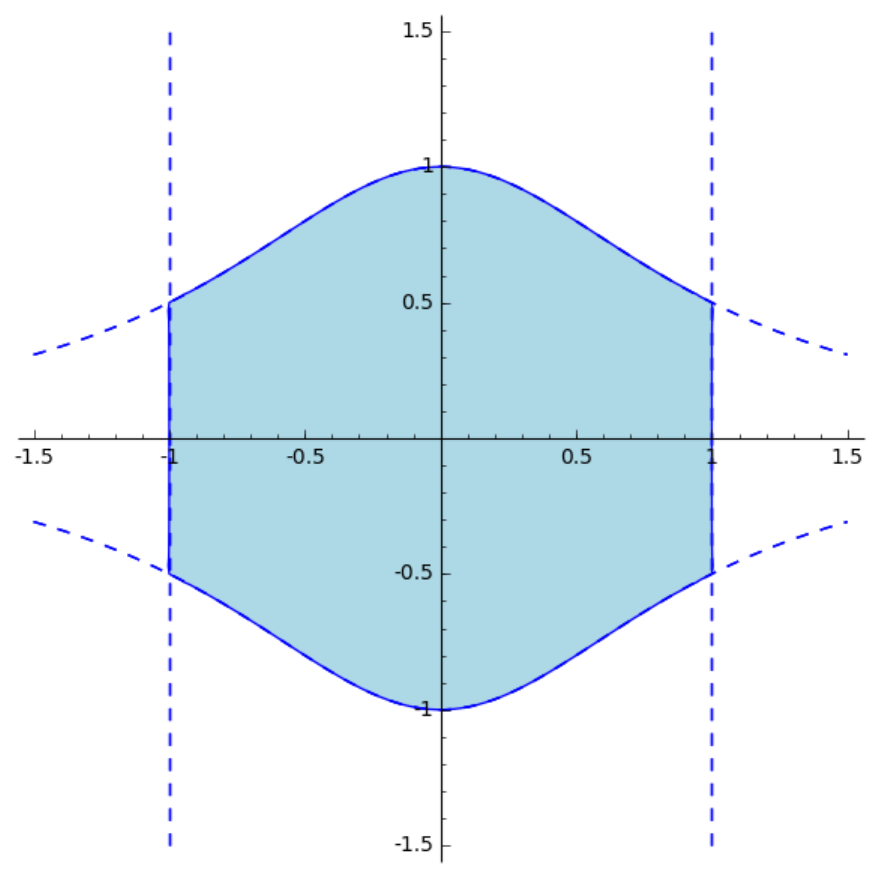}
  	\caption{A semi-canonical reduction of $\pi$ as a 2-dimensional volume of $K=\{-1\leq x\leq 1, -1\leq z(1+x^2)\leq 1\}$.}\label{fig:sc_red_pi}
  \end{figure}
\end{ex}

\begin{ex}\label{ex:pi1_dim2}
 Revisiting the previous example, we can represent a fraction of $\pi$ by the area of an unbounded 2-dimensional semi-algebraic set as follows. 
 \[
 	 \frac{\pi}{4} = \int_{1}^{\infty}\frac{1}{1+x^2}\dint x = \int_{D}\dint x\dint y
 \]
 with $D = \{x>1, 0<y(1+x^2)<1\}\subset\RR^2$ (see Figure~\ref{fig:pi_4}).
  
  \begin{figure}[ht]
  	\centering
  	\begin{tikzpicture}[scale=1.5]
	  	\colorlet{col1}{blue!40}
	  	\colorlet{col2}{red!40}
	  	\colorlet{col3}{violet!40}
	  	
	  	\begin{scope}[smooth,draw=gray!20]
	  	\filldraw [fill=col1,domain=1:4] plot  (\x, {1/(1+\x^2)}) -- (4,0)  -- (1,0) -- cycle;
	  	\draw[dashed,blue,domain=0:4,smooth] plot (\x, {1/(1+\x^2)});
	  	\draw[black,domain=1:4.1,smooth] plot (\x, {1/(1+\x^2)});
	  	\end{scope}
	  	\draw[->] (-0.25,0) -- (4.25,0) node [right] {$x$};
	  	\foreach \pos/\label in {0/$0$,1/$1$}
	  	\draw (\pos,0) -- (\pos,-0.1) (\pos cm,-3ex) node [anchor=base,inner sep=1pt]  {\label};
	  	\draw (-0.1,1) -- (.1,1);
	  	\node at (0,1.1) [right] {$1$};
	  	\node at (0,1.3) [left] {$y$};
	  	\node at (1.1,0.9) {$y=\frac{1}{1+x^2}$};
	  	\draw[->] (0,-0.25) -- (0,1.5);
	  	\node at (2,0.3) [above, font=\large] {$D$};
  	\end{tikzpicture}
  	\caption{The unbounded set $D = \{x>1, 0<y(1+x^2)<1\}$.}\label{fig:pi_4}
  \end{figure}
 
 Considering $D$ in the chart $U_z=\{[x:y:z] \mid z\neq0\}\subset\PR^2$ and taking $\{x=0\}\subset\PR^2$ as new line at infinity, we obtain a diffeomorphism  $\varphi$ of $\RR^2$ minus a line, given by $\varphi(x_1,y_1)=(1/x_1,y_1/x_1)$,  with associated jacobian determinant $ |\Jac(\varphi)|(x,y)=\frac{1}{x_1^3}$. 
 Thus,
 \begin{align*}
 	D_1 & =\varphi^{-1}D=\left\lbrace\frac{1}{x_1}>1,\ 0<\frac{y_1}{x_1}\left(1+\frac{1}{x_1^2}\right)<1\right\rbrace\\
 	    & = \left\lbrace 0<x_1<1,\ 0<y_1(1+x_1^2)<x_1^3\right\rbrace\\
 	    & = \left\lbrace 0<x_1<1,\ 0<y_1,\ 0<x_1^3-y_1(1+x_1^2)\right\rbrace,
 \end{align*}
 is bounded and it is related to the period as follows.
 \[
 	\Ical\left(D,1\right)=\int_{D}\dint x\dint y=\int_{D_1}\frac{\dint x_1\dint y_1}{x_1^3}.
 \]
 The jacobian contributes with a pole of order $3$ at the origin, which lies on the boundary of $D_1$.
 
\begin{figure}[ht]
   	\centering
   	\begin{tikzpicture}[scale=1.5]
   	    \colorlet{col1}{blue!40}
   	    \colorlet{col2}{red!40}
   	    \colorlet{col3}{violet!40}
   	
   	   \begin{scope}[smooth,draw=gray!20]
   			\draw[dashed,blue,domain=-0.5:1.75,smooth] plot (\x, {\x^3/(1+\x^2)});
   	        \filldraw [fill=col1,domain=0:1] plot (\x, {\x^3/(1+\x^2)})
   	            -- (1,0)  -- (0,0) -- cycle;
   	        \draw[black,domain=0:1,smooth] plot (\x, {\x^3/(1+\x^2)});
   	   \end{scope}
   	   \draw[->] (-0.25,0) -- (1.75,0) node [right] {$x_1$};
   	   \foreach \pos/\label in {0/,1/$1$}
   	        \draw (\pos,0) -- (\pos,-0.1) (\pos cm,-3ex) node [anchor=base,inner sep=1pt]  {\label};
   	   \node at (0,1.3) [left] {$y_1$};
   	   \draw[->] (0,-0.25) -- (0,1.5);
   	   \node at (1.5,0.15) [above, font=\large] {$D_1$};
   		\draw[semithick,dashed, color=red] (0,-0.4) -- (0,1.7);
   	\end{tikzpicture}
   	\caption{The domain $D_1=\left\lbrace 0<x_1<1,\ 0<y_1,\ 0<x_1^3-y_1(1+x_1^2)\right\rbrace$ and the pole locus of the integrand (red).}\label{fig:pi_4_D1}
   \end{figure}
 
 We are going to decrease the order of this pole by a sequence of blow-ups at the origin. Note that this order agrees with the intersection multiplicity at the origin of the curve $y_1(1+x_1^2)=x_1^3$ with the $x_1$-axis. %

 Performing a first blow-up at the origin, the situation in the first usual chart is described by the monomial transformation $\phi:\RR^2\to\RR^2$ given by $\phi(x_2,y_2)=(x_2,x_2y_2)$. In this new variables, we obtain that $\Ical\left(D_1,1/x_1^3\right)=\Ical\left(D_2,1/x_2^2\right)$, where $$D_2=\left\lbrace 0<x_2<1,\ \ 0<y_2,\ 0<x_2^2-y_2(1+x_2^2)\right\rbrace.$$
 It is worth noticing that $D_2$ is bounded and we have expressed our integral by only using one of the charts of the blow-up. This is due to the following geometric idea: taking a chart of the blow-up at the origin is essentially the same as choosing a $L$ in the pencil of lines passing through the origin of $\RR^2$ and sending $L$ to the new line at infinity. In this way, the pencil separates in parallel lines transverse to the exceptional divisor, which emerges from the origin to substitute the former $L$ in this new chart. Thus, the strict transform $\widetilde{D}$ of our domain $D$ remains bounded in the new chart whenever $L$ only meets $D$ at the origin and $L$ is not contained in the algebraic tangent cone $T_0(\partial_z D)$ at the origin. 
 
 In the previous transformation, one has $T_0(\partial_z D_1)=\{y_1=0\}$. We have chosen $L=\{x_1=0\}$, which is replaced by the exceptional divisor $E=\{x_2=0\}$ in the new chart with coordinates $(x_2,y_2)$.
 
 Repeating this process, pictured in Figure~\ref{fig:pi_4_res}, we obtain $\Ical\left(D_2,1/x_2^2\right)=\Ical\left(D_3,1/x_3\right)$, where $$D_3=\left\lbrace 0<x_3<1,\ \ 0<y_3,\ 0<-x_3^2y_3 + x_3-y_3\right\rbrace,$$ with $T_0(\partial D_3)=\{x_3-y_3=0\}$. Finally, we remove the pole at the origin and we obtain $\Ical\left(D_3,1/x_3\right)$ as the $2$-dimensional volume of the following set:
 $$D_4=\left\lbrace 0<x_4<1,\ \ 0<y_4,\ 0<-x_4^2y_4 -y_4 + 1\right\rbrace.$$
 
\begin{figure}[ht]
   	\centering
   	\begin{tikzpicture}
   	    \colorlet{col1}{blue!40}
   	    \colorlet{col2}{red!40}
   	    \colorlet{col3}{violet!40}
   	
   	\begin{scope}[xshift=-6cm]
   	 \begin{scope}[smooth,draw=gray!20]
   			\draw[dashed,blue,domain=-0.5:1.75,smooth] plot (\x, {\x^3/(1+\x^2)});
   	        \filldraw [fill=col1,domain=0:1] plot (\x, {\x^3/(1+\x^2)})
   	            -- (1,0)  -- (0,0) -- cycle;
   	        \draw[black,domain=0:1,smooth] plot (\x, {\x^3/(1+\x^2)});
   	   \end{scope}
   	   \draw[->] (-0.25,0) -- (1.75,0) node [right] {$x_1$};
   	   \foreach \pos/\label in {0/,1/$1$}
   	        \draw (\pos,0) -- (\pos,-0.1) (\pos cm,-0.4) node [anchor=base,inner sep=1pt, font=\footnotesize]  {\label};
   	   \node at (0,1.3) [left] {$y_1$};
   	   \draw[->] (0,-0.25) -- (0,1.5);
   	   \node at (0.7,0.5) [above, font=\large] {$D_1$};
   		\draw[semithick,dashed, color=red] (0,-0.4) -- (0,1.7);
   		 \draw[->] (3.25,0.75) -- (2.5,0.75);
   	\end{scope}
   	\begin{scope}[xshift=-2cm]
   	 \begin{scope}[smooth,draw=gray!20]
   			\draw[dashed,blue,domain=-0.5:1.75,smooth] plot (\x, {\x^2/(1+\x^2)});
   	        \filldraw [fill=col1,domain=0:1] plot (\x, {\x^2/(1+\x^2)})
   	            -- (1,0)  -- (0,0) -- cycle;
   	        \draw[black,domain=0:1,smooth] plot (\x, {\x^2/(1+\x^2)});
   	   \end{scope}
   	   \draw[->] (-0.25,0) -- (1.75,0) node [right] {$x_2$};
   	   \foreach \pos/\label in {0/,1/$1$}
   	        \draw (\pos,0) -- (\pos,-0.1) (\pos cm,-0.4) node [anchor=base,inner sep=1pt, font=\footnotesize]  {\label};
   	   \node at (0,1.3) [left] {$y_2$};
   	   \draw[->] (0,-0.25) -- (0,1.5);
   	   \node at (0.7,0.5) [above, font=\large] {$D_2$};
   		\draw[semithick,dashed, color=red] (0,-0.4) -- (0,1.7);
   		 \draw[->] (3.25,0.75) -- (2.5,0.75);
   	\end{scope}
   	\begin{scope}[xshift=2cm]
   	 \begin{scope}[smooth,draw=gray!20]
   			\draw[dashed,blue,domain=-0.5:1.75,smooth] plot (\x, {\x/(1+\x^2)});
   	        \filldraw [fill=col1,domain=0:1] plot (\x, {\x/(1+\x^2)})
   	            -- (1,0)  -- (0,0) -- cycle;
   	        \draw[black,domain=0:1,smooth] plot (\x, {\x/(1+\x^2)});
   	   \end{scope}
   	   \draw[->] (-0.25,0) -- (1.75,0) node [right] {$x_3$};
   	   \foreach \pos/\label in {0/,1/$1$}
   	        \draw (\pos,0) -- (\pos,-0.1) (\pos cm,-0.4) node [anchor=base,inner sep=1pt, font=\footnotesize]  {\label};
   	   \node at (0,1.3) [left] {$y_3$};
   	   \draw[->] (0,-0.25) -- (0,1.5);
   	   \node at (0.7,0.5) [above, font=\large] {$D_3$};
   		\draw[semithick,dashed, color=red] (0,-0.4) -- (0,1.7);
   		 \draw[->] (3.25,0.75) -- (2.5,0.75);
   	\end{scope}
   	\begin{scope}[xshift=6cm]
   	   \begin{scope}[smooth,draw=gray!20]
   			\draw[dashed,blue,domain=0:1.75,smooth] plot (\x, {1/(1+\x^2)});
   	        \filldraw [fill=col1,domain=0:1] plot (\x, {1/(1+\x^2)})
   	            -- (1,0)  -- (0,0) -- cycle;
   	        \draw[black,domain=0:1,smooth] plot (\x, {1/(1+\x^2)});
   	   \end{scope}
   	   \draw[->] (-0.25,0) -- (1.75,0) node [right] {$x_4$};
   	   \foreach \pos/\label in {0/,1/$1$}
   	        \draw (\pos,0) -- (\pos,-0.1) (\pos cm,-0.4) node [anchor=base,inner sep=1pt, font=\footnotesize]  {\label};
   	   \draw (-0.1,1) -- (.1,1);
   	   \node at (0,1) [left, font=\footnotesize] {$1$};
   	   \node at (0,1.3) [left] {$y_4$};
   	   \draw[->] (0,-0.25) -- (0,1.5);
   	   \node at (1,0.6) [above, font=\large] {$D_4$};
   	\end{scope}
   	
   	\end{tikzpicture}
   	\caption{Desingularization from $D_1$ to $D_4$ %
   	until removing the pole.}\label{fig:pi_4_res}
   \end{figure}
 
 Note that we have obtained a compact 2-dimensional domain from an unbounded one by our procedure, both representing the same period. However, this is not the case in general, since at this step one should take the volume under the integrand, which is usually not constant.
 \end{ex}

\begin{ex}[Another expression for $\pi$]
	Consider the period $\nu\in\Pkz^\RR$ described by the volume of the following unbounded two dimensional semi-algebraic set:
	\[
		\eta = \int_{S}\dint x\dint y,\quad\text{where}\quad S = \{x^4y^2-x+1<0\}\subset\{x>1\}.
	\]
	Taking the line $\{x=0\}$ in $\PR^2$ as line at infinity, we obtain a pole of order 3 in the chart with coordinates $(y,z)$. Moreover, 
	\[
		\int_{S}\dint x\dint y=\int_{S_0}\frac{\dint y\dint z}{z^3},\quad\text{with}\quad S_0 = \{z^6+y^2-z^2<0\}.
	\]
	Note that $S_0$ is contained in the upper semi-plane (see Figure~\ref{fig:pi_S1}) and $T_0(\partial S_0)=\{y^2=0\}$. Composing two blow-ups at the origin as before, we transform the integral by a diffeomorphism $\phi(y_2,z_2)=(y_2z_2^2,z_2)$ of $\RR^2\setminus\{z=0\}$ as follows:
	\[
	    \int_{S_0}\frac{\dint y\dint z}{z^3}=\int_{S_2}\frac{\dint y_2\dint z_2}{z_2},\quad\text{where}\quad S_2=\left\lbrace y_2^2+z_2^2-z_2<0 \right\rbrace.
	\]
	
	\begin{figure}[ht]
   	\centering
   	\includegraphics[scale=0.45]{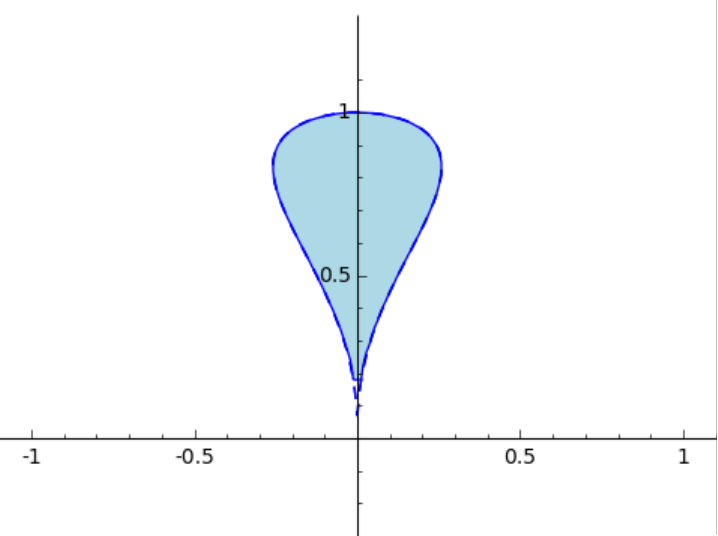}\hspace*{1cm}\includegraphics[scale=0.45]{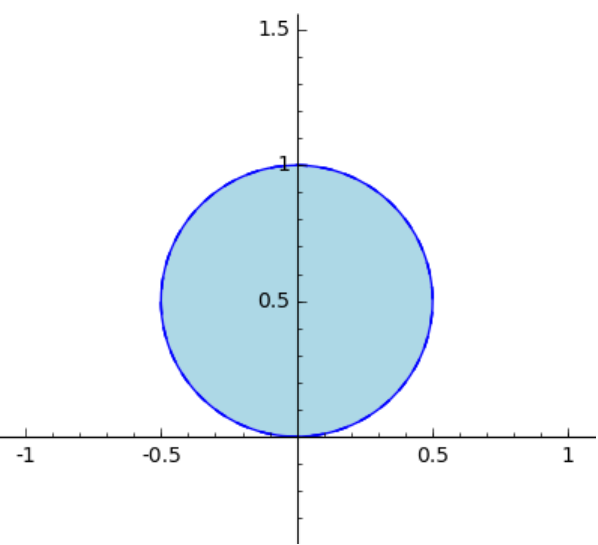}
   	\caption{Domains $S_0=\{z^6+y^2-z^2<0\}$ (left), and $S_2=\left\lbrace y_2^2+z_2^2-z_2<0  \right\rbrace$ (right).}\label{fig:pi_S1}
   \end{figure}
   
   At this step, we notice that the boundary of $S_2$ is in fact a smooth variety whose tangent line at the origin is $z_2=0$. In addition, any other line at the origin intersects the interior of $S_2$. Thus, the strict transform of $S_2$ is unbounded at any chart of an another blow-up.
   
   In order to resolve this situation, we partition $S_3$ in $S_3^1=S_3\cap\{y_2>0\}$ and $S_3^2=S_3\cap\{y_2<0\}$, and one has that $\Ical\left(S_3^1,1/z_3\right)=\Ical\left(S_3^2,1/z_3\right)$ by symmetry. We blow-up $S_3^1$ taking the chart with respect the line $z_3+y_3=0$, obtaining that:   
   \[
       \int_{S_3^1}\frac{\dint y_3\dint z_3}{z_3}=\int_{S_4^1}\frac{\sqrt{2}\dint y_4\dint z_4}{1+z_4},
   \]
   with $S_4^1=\left\lbrace y_4>0, 1-z_4>0, -y_4z_4^2-y_4 + \frac{\sqrt{2}}{2}(z_4+1) \right\rbrace$, pictured  in Figure~\ref{fig:pi_S41_S51}. It remains to resolve the pole of order $1$ at the point $(0,-1)$. Locally, the tangent cone has equations $2y_4-\frac{2}{2}z_4=0$ at this point. Since $S_4^1\subset \{z_4+1>0\}$, we take the chart with respect to the line $z_4+1=0$ to obtain a regular rational form:
   \[
       \int_{S_4^1}\frac{\sqrt{2}\dint y_4\dint z_4}{1+z_4}=\int_{S_5^1}\sqrt{2}\dint y_5\dint z_5,
   \]
   with $S_5^1=\left\lbrace y_5>0,-1<z_5<1,y_5(1+z_5^2)<\sqrt{2}/2\right\rbrace$ (see Figure~\ref{fig:pi_S41_S51}). Repeating this process with $S_3^2$, we obtain an identical piece $S_5^2$, symmetric to $S_5^1$ with respect to the OZ-axis. In fact,  %
   we obtain the same semi-canonical reduction (up to isometry) as in Example~\ref{ex:pi1_dim2}, thus $\eta=\pi$.
   \begin{figure}[ht]
   	\centering
   	\includegraphics[scale=0.45]{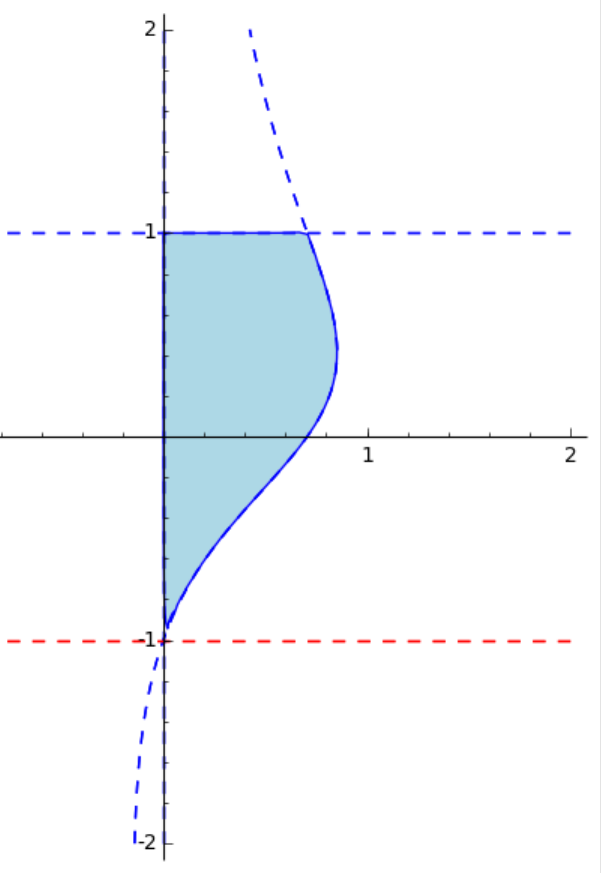}\hspace*{1cm}\includegraphics[scale=0.45]{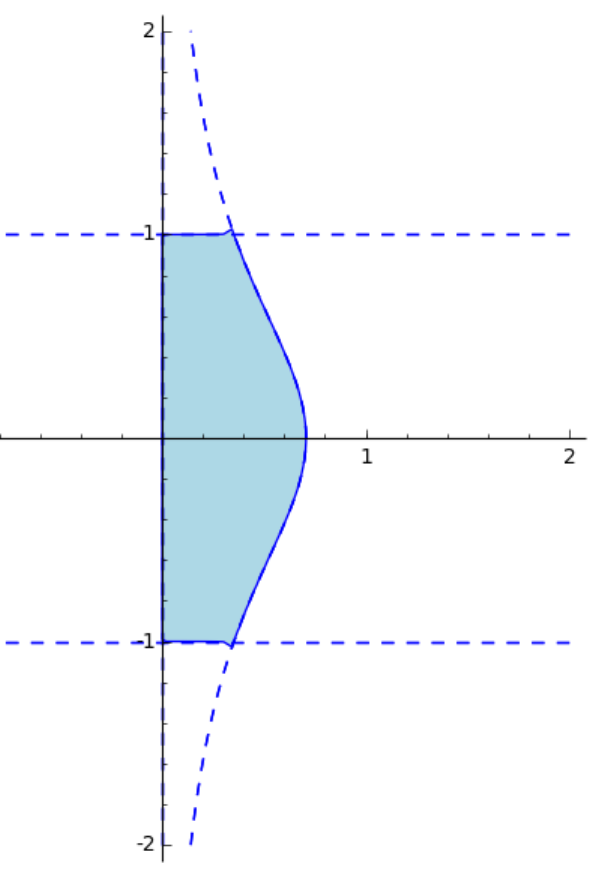}
   	\caption{The domains $S_4^1=\left\lbrace y_4>0, 1-z_4>0, -y_4z_4^2-y_4 + \frac{\sqrt{2}}{2}(z_4+1) \right\rbrace$ including the pole locus in red (left) and $S_5^1=\left\lbrace y_5>0,-1<z_5<1,y_5(1+z_5^2)<\frac{\sqrt{2}}{2}\right\rbrace$ (right).}\label{fig:pi_S41_S51}
   \end{figure}
\end{ex}

\subsection{Multiple zeta values}\label{subsec:mzv}

We have previously introduced multiple zeta values $\zeta(s_1,\ldots,s_k)$ as examples of real periods. This numbers are also described as iterated integrals which can be expressed as the integral of a rational function which depends on the tuple $(s_1,\ldots,s_k)$ over a simplex $\triangle$ of dimension $k+1$, see e.g.~\cite[Sec.~2]{Wald00} for more details.

\begin{ex}%
    Consider the value
    \[
    	\zeta(2)=\sum_{n\geq 1} \frac{1}{n^2}=\frac{\pi^2}{6}.
    \]
    We know that it can be expressed as the integral
    \[
        \int_{\triangle} \frac{\dint x\dint y}{(1-x)y},
    \]
    over the open simplex $\triangle=\left\lbrace 0< x< y< 1\right\rbrace$. The above denominator gives two poles in $\partial\triangle$, one at the origin and another at $(1,1)$. Note that the tangent cone of $\partial\triangle$ at a point $p\in\partial\triangle$ is given by the lines containing the faces involving $p$. After a first blow-up at the origin and taking the second chart $\phi(x_1,y_1)=(x_1y_1,y_1)$, one has that:
    \[
        \int_{\triangle} \frac{\dint x\dint y}{(1-x)y}=\int_{\Box} \frac{\dint x_1\dint y_1}{1-x_1y_1},
    \]
    where $\Box=\phi^{-1}\triangle=\{-1< x,y < 1\}$ (see Figure~\ref{fig:pi2_TC}).
    
    \begin{figure}[ht]
   	\centering
   	\includegraphics[scale=0.45]{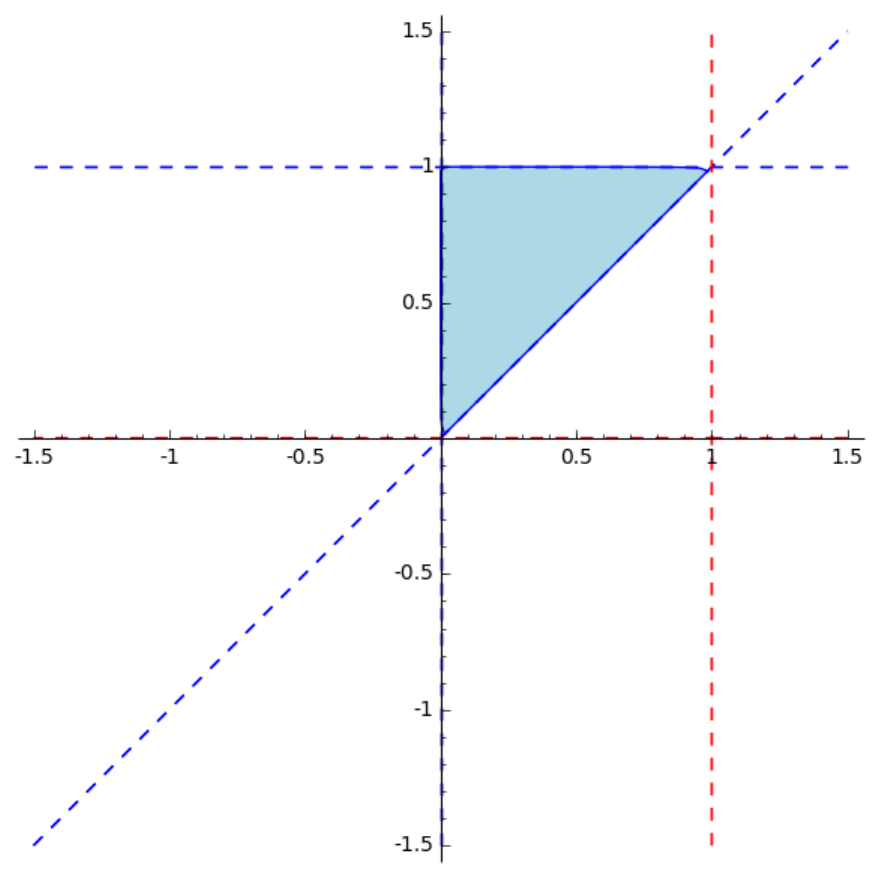}\hspace*{1cm}\includegraphics[scale=0.45]{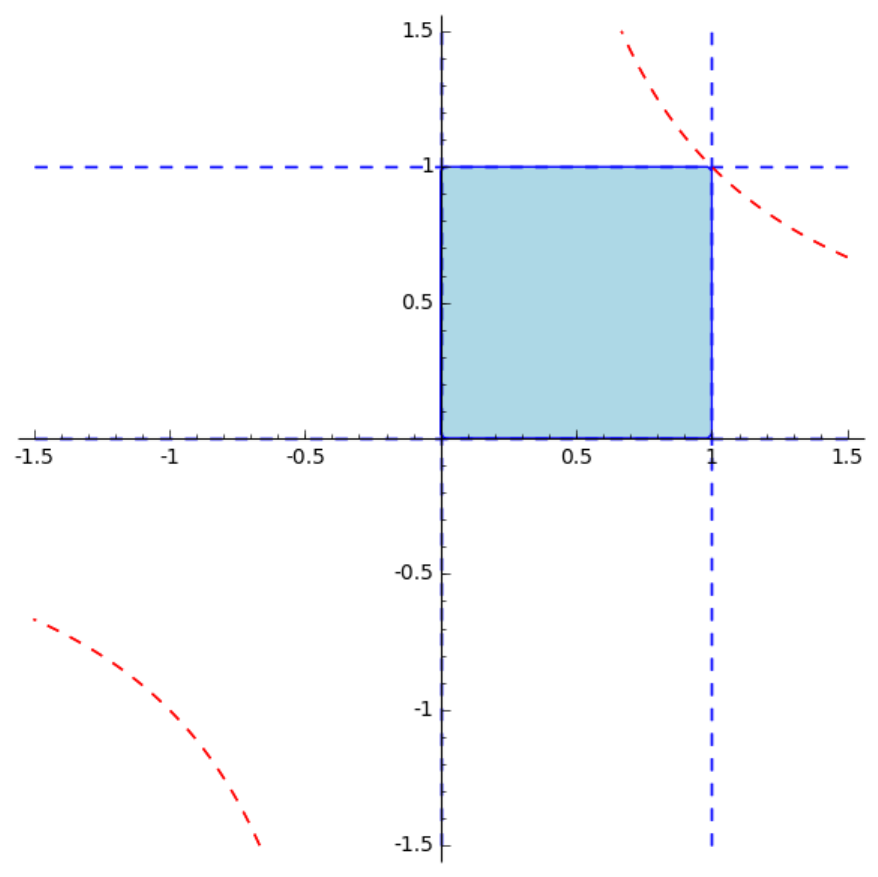}
   	\caption{The domains $\triangle=\left\lbrace 0< x< y< 1\right\rbrace$ (left), and $\Box=\{-1< x,y < 1\}$ (right), including the pole locus in red.}\label{fig:pi2_TC}
   \end{figure}
   
   The tangent cone of $\partial\Box$ at the remaining pole is exactly the translated coordinate axis at $(1,1)$. Thus, we finish the procedure taking the chart of the blow-up with respect to the line $L=\{x_1+y_1-2=0\}$. We can construct such a map $\phi$ by composing the blow-up at the origin with the isometry which sends the origin to $(1,1)$ and the line ${y_1=0}$ to ${x_1+y_1-2=0}$. One has that $\phi$ is an isomorphism between $\RR^2\setminus \{x_2=0\}$ and $\RR^2\setminus L$, inducing the equality:
   \[
       \int_{\Box} \frac{\dint x_1\dint y_1}{1-x_1y_1}=\int_{T}\frac{2\dint x_2\dint y_2}{-x_2y_2^2 +  x_2 + 2 \sqrt{2}}
   \]
   with $T=\left\lbrace x<0, -1<y_2<1, -x_2y_2 +  x_2 + \sqrt{2}>0, x_2y_2 + x + \sqrt{2}>0 \right\rbrace$, which does not contain poles on the boundary, see Figure~\ref{fig:pi2_T_Tf}. Furthermore, the rational function $f(x_2,y_2)=2/(-x_2y_2^2 +  x_2 + 2 \sqrt{2})$ does not change of sign over $T$, then taking the volume under the hypersurface $f=0$, one has:
   \[
       \zeta(2)=\vol_3(T_f),\quad\text{with}\quad T_f=\left\lbrace (x,y,z)\in T\times\RR \mid z>0, 2+z(xy^2 -  x - 2 \sqrt{2})>0\right\rbrace.
   \]

   \begin{figure}[ht]
   	\centering
   	\includegraphics[scale=0.45]{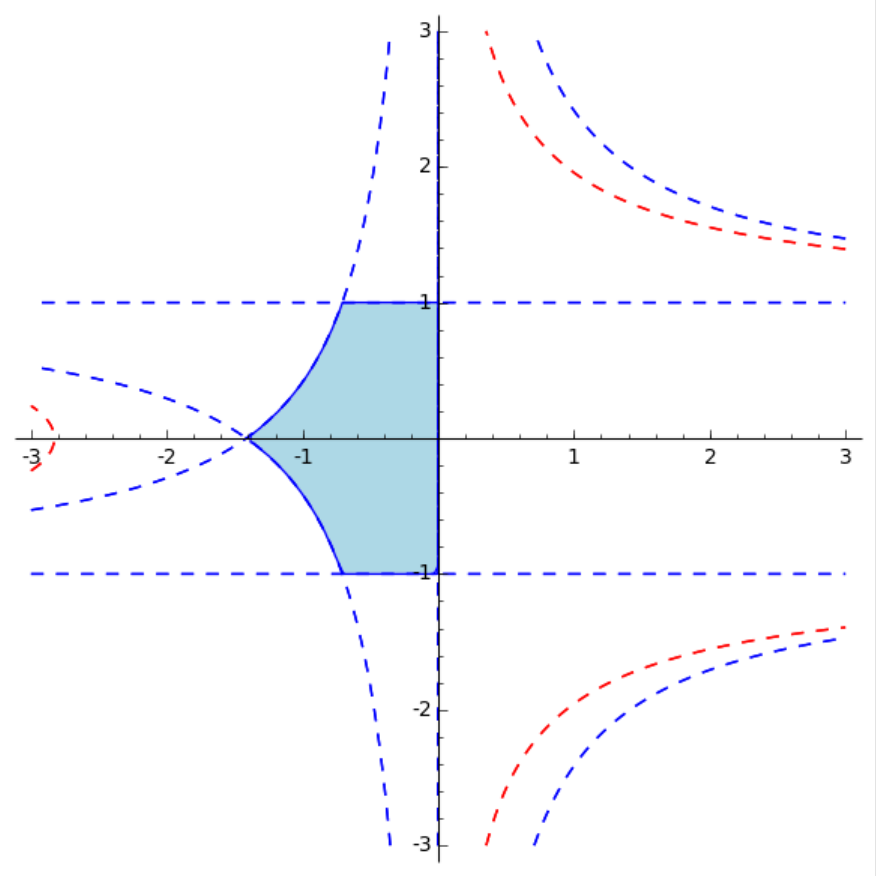}\hspace*{1cm}\includegraphics[scale=1.9]{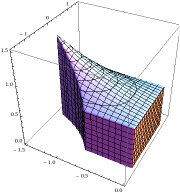}
   	\caption{The domains $T=\left\lbrace x<0, -1<y_2<1, -x_2y_2 +  x_2 + \sqrt{2}>0, x_2y_2 + x + \sqrt{2}>0 \right\rbrace$ including the pole locus in red (left) and $T_f$ (right).}\label{fig:pi2_T_Tf}
   \end{figure}
\end{ex}

\bigskip
\section{Conclusions}\label{sec:conclusions}

\addtocontents{toc}{\protect\setcounter{tocdepth}{1}}

\subsection*{Exponential periods}
Prof.~Waldschmidt asked us about a possible extension of our results in the case of \emph{exponential periods}, i.e. numbers which can be written as values of absolutely convergent integrals of the product of an algebraic function with the exponential of another algebraic function over a semi-algebraic set where all polynomials appearing in the integral have algebraic coefficients. A typical example is
\[
	\sqrt{\pi}=\int_{-\infty}^{+\infty}e^{-x^2}\dint x.
\]
It seems possible, using the same techniques, to find a reduction of exponential periods considering the exponential part as a volume form and generalizing our procedure over the non-exponential part, i.e. a reduction of the form
\[
	\int_{K} e^{g(x_1,\ldots,x_d)}\cdot\dint x_1\wedge\cdots\wedge\dint x_d,
\]
where $K\subset\RR^d$ is a compact semi-algebraic set and $g\in\RRalg\left(x_1,\ldots,x_d\right)$.

\subsection*{Approximation of periods} 
Theorem~\ref{thm:semialg_form_thm} suggests that one could derive a rational or algebraic approximation of a period by computing the volume of a geometric approximation of the compact semi-algebraic set obtained by the reduction algorithm, improving Yoshinaga's work.%

\subsection*{Zero-detection problem}
Prof.~Rivoal asked us about the possibility of detecting the zero as a period using the semi-canonical reduction, i.e. whenever we have an integral $\Ical(S,P/Q)$, to test if this integral is zero or not. The answer is negative, because one has to assume that the volumes of the two compact semi-algebraic sets which express the period by their difference are not equal, as it is detailed in the procedure in Section~\ref{sec:diff_vol}. In fact, the above problem in our setting is equivalent to construct an Equality algorithm for periods.\\

\bigskip
\appendix

\section{Pseudo-code of the semi-canonical reduction procedure}\label{sec:appendix}

In the following, we describe the main procedure given in pseudo-code, called {\sc SemiCanPeriod}. The input data is a triple $(S,P,Q)$ where $S\subset\RR^d$ is a top-dimensional semi-algebraic set, and $P,Q\in\RRalg[X_1,\ldots,X_d]$ are coprime polynomials. A triple $(S,P,Q)$ is called \emph{admissible} if $\Ical(S,P/Q)$ is absolutely convergent. The following pseudo-code expression
\[
	A/B\gets\texttt{arithmetic operations between rational functions}
\] 
means that we are assigning to $A$ (resp. $B$) the numerator (resp. denominator) of the resulting reduced fraction given in the right-hand side.

The procedures of the different intermediary subroutines {\sc CompactifyDomain}, {\sc ResolvePoles} and {\sc VolumeFromDiffSA}, which are detailed in Sections~\ref{sec:sas_projective} and~\ref{sec:diff_vol}, are described in Algorithm~\ref{alg:compactdoms}, Algorithm~\ref{alg:resolvepoles} and Algorithm~\ref{alg:volfromdiff} respectively.

Following the notations in Section~\ref{sec:sas_projective}, we assume that the Hironaka-Villamayor resolution of singularities procedure returns a (finite) list of lists
\[
	\left\{\	\{(V_{i_1,\ldots,i_k},\Phi_{i_1,\ldots,i_k})\}_{(i_1,\ldots,i_k)\in M_k}
	\ \right\}_{k=1,\ldots,r}
\]
of compatible affine charts of the embedded resolution $\pi=\pi_r\circ\cdots\circ\pi_1:W\to\RR^d$ of $X$ numbered by each blow-up $\pi_k$, where:
\begin{itemize}
	\item $V_{i_1,\ldots,i_k}$ is the affine chart represented by the corresponding ring,
	
	\item $\Phi_{i_1,\ldots,i_k}$ is the birational map representing $\pi_k$ between the charts $V_{i_1,\ldots,i_{k-1},i_k}$ and $V_{i_1,\ldots,i_{k-1}}$.
\end{itemize}
\bigskip

\begin{center}
  \captionof{algorithm}{{\sc SemiCanPeriod}: Semi-canonical reduction of $p=\mathcal{I}(S,P/Q)\in\Pkz^\RR$.}\label{alg:reduction}
	\begin{algorithmic}[1]
		\Require{An admissible triple $(S,P,Q)$.}
		\Ensure{A compact semi-algebraic $K$ with same dimension of $S$ such that $\vol(K)=\mathcal{I}(S,P/Q)$.}
		\Procedure{SemiCanPeriod}{$S,P,Q$}
		\State\Comment{Partition by sign of the integrand}
		\State $S^+\gets\{\xx\in S \mid 0<(P/Q)(\xx)\}$
		\State $S^-\gets\{\xx\in S \mid (P/Q)(\xx)<0\}$
		\State\Comment{Lists of triples $\left(S_i^\pm,P_i^\pm,Q_i^\pm\right)$ with $S_i^\pm$ is bounded}
		\State $L^+\gets\textsc{CompactifyDomain}(S^+,P,Q)$
		\State $L^-\gets\textsc{CompactifyDomain}(S^-,P,Q)$
		\State\Comment{Lists of triples $(\widetilde{S_j}^\pm,\widetilde{P_j}^\pm,\widetilde{Q_j}^\pm)$ with resolved poles at the boundary}
		\State $\widetilde{L}^+, \widetilde{L}^-\gets\{\},\{\}$
		\For{$(S^+,P^+,Q^+)\in L^+$ and $(S^-,P^-,Q^-)\in L^-$}
		\State $\widetilde{L}^+\gets\widetilde{L}^+\cup\textsc{ResolvePoles}(S^+,P^+,Q^+)$
		\State $\widetilde{L}^-\gets\widetilde{L}^-\cup\textsc{ResolvePoles}(S^-,P^-,Q^-)$
		\EndFor
		\State\Comment{We define the compact sets under the integrand}
		\State $K^+, K^-\gets\emptyset,\emptyset$
		\For{$(\widetilde{S}^+,\widetilde{P}^+,\widetilde{Q}^+)\in \widetilde{L}^+$ and $(\widetilde{S}^-,\widetilde{P}^-,\widetilde{Q}^-)\in \widetilde{L}^-$}
		\State $K^+\gets K^+\cup\{(\xx,t)\in S^+\times\RR \mid 0\leq t\leq (P^+/Q^+)(\xx)\}$
		\State $K^-\gets K^-\cup\{(\xx,t)\in S^-\times\RR \mid (P^-/Q^-)(\xx)\leq t\leq 0\}$
		\EndFor
		\State\Comment{We construct the compact set $K$ from $K^+$ and $K^-$ which volume is the difference of these sets}
		\If{$\int_S P/Q>0$}
		\State $K\gets\textsc{VolumeFromDiffSA}(K^+,K^-)$
		\Else
		\State $K\gets\textsc{VolumeFromDiffSA}(K^-,K^+)$
		\EndIf
		\State \textbf{return} $K$\Comment{A compact semi-algebraic set $K$ representing $p$}
		\EndProcedure
	\end{algorithmic}
\end{center}\bigskip

\begin{center}
  \captionof{algorithm}{ {\sc CompactifyDomain}: Partition and compactification of domains.}\label{alg:compactdoms}
	\begin{algorithmic}[1]
		\Require{An admissible triple $(S,P,Q)$.}
		\Ensure{A finite list of admissible triples $(S_i,P_i,Q_i)$ such that $S_i$ compact and $\mathcal{I}(S,P/Q)=\sum_i\mathcal{I}(S_i,P_i/Q_i)$.}
		\Procedure{CompactifyDomain}{$S,P,Q$}
		\State $d\gets\dim S$
		\State $S_0\gets S\cap\{-1\leq x_1\leq 1,\ldots,-1\leq x_d\leq 1\}$
		\State $L_\text{dec}\gets \{(S_0,P,Q)\}$\Comment{List of results with the decomposition}
		\For{$i\gets 1,\ldots,d$}
		\State $V_i\gets \bigcap_{j=1}^d \left\lbrace x_i\geq 1,x_i\geq x_j,x_i\geq -x_j\right\rbrace\cup\left\lbrace x_i\leq -1,x_i\leq x_j,x_i\leq -x_j\right\rbrace$
		\State $S_i\gets S\cap V_i$
		\State $S_i\gets$ Change of variables in $S_i$: $x_i=1/x_0$, $x_j=x_j/x_0,\forall j\neq i$
		\State $P_i/Q_i\gets$ Change of variables in $P_i/Q_i$: $x_i=1/x_0$, $x_j=x_j/x_0,\forall j\neq i$
		\State $P_i/Q_i\gets P_i/Q_i\times(1/x_0^{d+1})$\Comment{The Jacobian of the change of variables}
		\State $L_\text{dec}\gets L_\text{dec}\cup\{(S_i,P_i,Q_i)\}$
		\EndFor
		\State \textbf{return} $L_\text{dec}$
		\EndProcedure
	\end{algorithmic}
\end{center}\bigskip

\begin{center}
  \captionof{algorithm}{ {\sc ResolvePoles}: Resolution of poles on the boundary of the domain.}\label{alg:resolvepoles}
	\begin{algorithmic}[1]
		\Require{An admissible triple $(S,P,Q)$.}
		\Ensure{A finite list of admissible triples $(\widetilde{S}_i,\widetilde{P}_i,\widetilde{Q}_i)$ such that $\widetilde{S}_i$ is compact, $\widetilde{Q}_i$ has not zeros in $\widetilde{S}_i$ and $\mathcal{I}(S,P/Q)=\sum_i\mathcal{I}(\widetilde{S}_i,\widetilde{P}_i/\widetilde{Q}_i)$.}
		\Procedure{ResolvePoles}{$S,P,Q$}
		
		\State $L_\text{dec}\gets \{\}$\Comment{List of results}
		\If{$S\cap \{Q=0\}=\emptyset$}
			\State $L_\text{dec}\gets\{(S,P,Q)\}$
		\Else
			\State $d\gets\dim S$
			\State $X\gets\partial_z S\cup \{P=0\}\cup \{Q=0\}$
			\State $\{
			\{(V_{i_1,\ldots,i_k},\Phi_{i_1,\ldots,i_k})\}_{(i_1,\ldots,i_k)\in M_k}
			\}_{k=1,\ldots,r}\gets$ The list of lists associated to of the embedded resolution $\pi=\pi_r\circ\cdots\circ\pi_1:W\to\RR^d$ of $X$.
			
			\State $M_\text{stop}\gets \{\}$\Comment{List of indices of charts where the semi-alg. set and pole locus are disjoint}
			\For{$i\gets 1,\ldots,m_1$}\Comment{Initialization: first blow-up $\pi_1:W_1\to\RR^d$}
			\State $\widetilde{S}_{i}\gets
			\textsc{Closure}(\textsc{Interior}(
			\Phi^{-1}_{i}(S)\cap(\RR^d\times C_{i})
			))$
			\State $\widetilde{P}_{i}/\widetilde{Q}_{i}\gets (\Phi_{i})^* (P/Q)\times \Jac(\Phi_{i})$
			\If{$\widetilde{S}_{i}\cap \{Q_{i}=0\}=\emptyset$}
				\State $L_\text{dec}\gets L_\text{dec}\cup\{(\widetilde{S}_{i},\widetilde{P}_{i},\widetilde{Q}_{i})\}$
				\State $M_\text{stop}\gets M_\text{stop}\cup\{(i)\}$
			\EndIf	
			\EndFor
			\For{$k\gets 2,\ldots,r$}\Comment{Recursion: intermediary blow-up $\pi_k:W_k\to W_{k-1}$}
				\For{$(i_1,\ldots,i_k)\in M_k$}
				\If{$(i_1,\ldots,i_{k'})$ is not in $M_\text{stop}$ for any $0\leq k'\leq k$}
					\State $\widetilde{S}_{i_1,\ldots,i_k}\gets
					\textsc{Closure}(\textsc{Interior}(
					\Phi^{-1}_{i_1,\ldots,i_k}(\widetilde{S}_{i_1,\ldots,i_{k-1}})\cap(\RR^d\times C_{i_k})
					))$
					\State $\widetilde{P}_{i_1,\ldots,i_k}/\widetilde{Q}_{i_1,\ldots,i_k}\gets (\Phi_{i_1,\ldots,i_k})^* (\widetilde{P}_{i_1,\ldots,i_{k-1}}/\widetilde{Q}_{i_1,\ldots,i_{k-1}})\times \Jac(\Phi_{i_1,\ldots,i_k})$ 
					\If{$\widetilde{S}_{i_1,\ldots,i_k}\cap \{Q_{i_1,\ldots,i_k}=0\}=\emptyset$}
						\State $L_\text{dec}\gets L_\text{dec}\cup\{(\widetilde{S}_{i_1,\ldots,i_k},\widetilde{P}_{i_1,\ldots,i_k},\widetilde{Q}_{i_1,\ldots,i_k})\}$
						\State $M_\text{stop}\gets M_\text{stop}\cup\{(i_1,\ldots,i_k)\}$
					\EndIf	
				\EndIf
				\EndFor
			\EndFor
		\EndIf
		\State \textbf{return} $L_\text{dec}$
		\EndProcedure
	\end{algorithmic}
\end{center}\bigskip

\begin{center}
  \captionof{algorithm}{ {\sc VolumeFromDiffSA}: Construction of a compact semi-algebraic set from the difference of other two.}\label{alg:volfromdiff}
	\begin{algorithmic}[1]
		\Require{Two compact semi-algebraic sets $K_1,K_2$ of maximal dimension $d$ such that $\vol_d(K_2)<\vol_d(K_1)<+\infty$.}
		\Ensure{A compact semi-algebraic $K$ such that $\dim K=d$ and $\vol_d(K)=\vol_d(K_1)-\vol_d(K_2)$.}
		\Procedure{VolumeFromDiffSA}{$K_1,K_2$}
		\State $r\gets\min\{N\in\NN \mid K_1\cup K_2\subset [0,N]^d\}$
		\State $\Delta_1\gets \{\}$, $\Delta_2\gets\{\}$
		\State $\delta_1\gets 0$, $\delta_2\gets 1$
		\State $n\gets 1$
		\While{$\delta_1<\delta_2$}\Comment{This condition fails in finite time as a consequence of Lemma~\ref{lem:deltas_ineq_inclusion} }
		\For{$(k_1,\ldots,k_d)\in\{0,\ldots,n\}^d$}
		\State $\mathring{C}_n(k_1,\ldots,k_d)\gets \left(\frac{k_1}{n}r,\frac{k_1+1}{n}r\right)\times\ldots\times\left(\frac{k_d}{n}r,\frac{k_d+1}{n}r\right)$
		\If{$\mathring{C}_n(k_1,\ldots,k_d)\subset K_1$}
		\State $\Delta_1\gets\Delta_1\cup\{\mathring{C}_n(k_1,\ldots,k_d)\}$
		\ElsIf{$\mathring{C}_n(k_1,\ldots,k_d)\cap K_2\neq\emptyset$}
		\State $\Delta_2\gets\Delta_2\cup\{\mathring{C}_n(k_1,\ldots,k_d)\}$
		\EndIf
		\State $\delta_1\gets\#\Delta_1$, $\delta_2\gets\#\Delta_2$
		\EndFor  
		\EndWhile
		\State $K\gets K_1$
		\For{$k\gets1,\ldots,\delta_2$}\Comment{Elimination}
		\State $D\gets K_2\cap\Delta_2[k]$
		\State $D\gets$ Change of variables in $D$: $\tilde{x_i}=x_i + \frac{k_i' - k_i}{n}r, \forall x_i$, where $(\Delta_1[k])_i=\left(\frac{k_i'}{n}r,\frac{k_i'+1}{n}r\right)$
		\State $K\gets K\setminus D$
		\EndFor
		\State \textbf{return} \textsc{Closure}($K$)
		\EndProcedure
	\end{algorithmic}
\end{center}
\bigskip
%
%
%
\addtocontents{toc}{\protect\setcounter{tocdepth}{0}}

\bigskip
\section*{Acknowledgments}

The author would like to thank Professors Jacky Cresson and Enrique Artal for their support, encouragement and numerous discussions. We are also very grateful to Professors Michel Waldschmidt and Pierre Cartier for helpful discussions and ideas. The author would also like to thank the anonymous referee for their valuable comments which helped improve the manuscript.

\addtocontents{toc}{\protect\setcounter{tocdepth}{1}}

\bigskip
\bigskip
\bibliographystyle{alpha}
\bibliography{biblio}

\end{document}